\newtheoremstyle{theorem}
  {10pt}		  
  {10pt}  
  {\sl}  
  {\parindent}     
  {\bf}  
  {. }    
  { }    
  {}     
\theoremstyle{theorem}
\newtheorem{theorem}{Theorem}
\newtheorem{corollary}{Corollary}
\newtheorem{proposition}{Proposition}
\newtheorem{definition}{Definition}
\newcommand{\naturals}{\mathbb{N}}
\newcommand{\reals}{\mathbb{R}}
\newcommand{\complex}{\mathbb{C}}
\newcommand{\A}{{\rm \mathbf{A}}}
\newcommand{\D}{{\rm \mathbf{D}}}
\newcommand{\I}{{\rm \mathbf{I}}}
\begin{document}
\bibliographystyle{plain}

\title{\textbf{\Large{Complete bases of canonical polynomials and the\\full  recursive formulation of the spectral-tau method
			}}
}
\author{Alexandros G. Paraskevopoulos\\
\small The Center for Research and Applications of Nonlinear Systems (CRANS)\\
\small Department of Mathematics, Division of Applied Analysis,\\
\small University of Patras, 26500 Patra, Greece}

\maketitle

\begin{abstract} A decomposition of a higher order linear differential operator with polynomial coefficients into a direct sum of two factor operators is obtained. This leads to a lower echelon matrix representation for operators of the above mentioned type arising from a finite sequence of elementary operations. As a consequence, a complete basis of canonical polynomials is generated via a modified Ortiz' recurrence formula. The approach presented in this paper enables us to curry out the full and self-operative recursive implementation of the spectral-tau method.  
These results can be applied to a variety of operator equations associated with infinite matrices in lower row echelon form, having implications for a broad family of discrete and continuous numerical methods, interpreted as special realizations of the tau method under specific perturbation terms.
\end{abstract}

\section{Introduction} \label{sec:Introduction}
Canonical polynomials (CPs) were introduced by Lanczos in~\cite{La:An} along with the tau method and used by Ortiz in~\cite{Or:tau} for the recursive formulation of this method. 
Ortiz offered, in the reference cited above, a formal definition of the CPs, associated with higher order linear differential operators with polynomial coefficients, along with a recursive formula generating sequences of CPs. The recursive formula has been followed by an algorithm for the generation of some undetermined CPs, due to the appearance of singularities in the recursive construction of a  sequence of CPs. Such singularities made it difficult to program the recursive formulation of the tau method via CPs. An operational formulation of the tau method was proposed by Ortiz and Samara in~\cite{Sa:op}, as an alternative to the recursive formulation of this method.  It made it possible to program the tau method loosing, however, the recursive advantages regarding CPs. 
Singular CPs were further analysed and classified by Bunchaft in~\cite{Bu:ext} as either primary-singular or derived CPs. Moreover an extension of Ortiz' recursive formula is accomplished there so as to recover primary-singular CPs. 

Ortiz' representation theorems for the recursive formulation of the tau method have been generalized in~\cite{Pa:reap} leading to a recursive approach for the exact solution of linear equations on infinite dimensional vector spaces. Given an arbitrary well-ordered basis $\mbox{\boldmath $\mathrm{b}$}=(\mbox{\boldmath $\mathrm{b}$}_{i})_{i \in I}$ of the codomain space of a linear mapping $f$, the axiom of choice in connection with Noether bases of subspaces of a vector space would serve to generalize the notion of sequences of CPs, therein identified as \emph{families of Ortiz canonical vectors} (\emph{OC}-vectors for short) associated with $f$ relative to $\mbox{\boldmath $\mathrm{b}$}$. In analogy with the case of differential operators, a family of \emph{OC}-vectors is also generated by means of a recursive formula with the aid of a \emph{standard family} associated with $f$, relative to $\mbox{\boldmath $\mathrm{b}$}$. The generalized recursive formula has the advantage of yielding a \emph{complete family} of \emph{OC}-vectors in the sense that the so-derived family can be extended to a basis of the domain space of the linear mapping by means of elements of a basis of its null space, termed Ortiz canonical basis (\emph{OC}-basis). It turns out that a complete family of \emph{OC}-vectors is a basis for the representation of a solution $\mbox{\boldmath ${x}$}$ of a linear equation 
\begin{equation} \label{eq-abstract}
	                f(\mbox{\boldmath ${x}$})=\mbox{\boldmath ${y}$},
\end{equation}
such that the coefficients of every right hand side vector $\mbox{\boldmath ${y}$}$ in the range of $f$, relative to $\mbox{\boldmath $\mathrm{b}$}$, remain invariant in the expression of $\mbox{\boldmath ${x}$}$ omitting the coefficients indexed by the set $S$ of \emph{inaccessible index}. In analogy with an \emph{OC}-basis, a standard family supplemented by the elements of a basis of the null space of $f$ yields a \emph{standard basis} associated with $f$ relative to $\mbox{\boldmath $\mathrm{b}$}$. Moreover, a natural mutual association between standard bases and infinite matrices in lower row echelon form (LREF) is established there, which enabled us to identify standard bases as bases of the domain space generating matrices in LREF. As the tau approximations are the exact polynomial solutions of a perturbed equation, the problem of their construction in terms of \emph{OC}-bases is completely reduced to the problem of transforming the infinite matrix representation of a linear operator into an LREF. Accordingly infinite matrices in LREF provide a general and unified scheme for the recursive construction of \emph{OC}-bases associated with linear operators. Some concrete examples from differerent areas of applications are also given in~\cite{Pa:reap}. 

Let $D= \sum_{i=0}^{\nu} p_{i}(x) \frac{d^{i}}{dx^{i}}$ be a differential operator of order $\nu$ with polynomial coefficients restricted to the space ${\mathcal{P}}({\reals})$ of real polynomials. In this paper we show that it is always possible to decompose $D$ into a direct sum of two factor operators, say $D=D_{1}\oplus D_{2}$, which possesses the following properties: \\
(i) $D_{1}$ is defined on a finite dimensional subspace of ${\mathcal{P}}({\reals})$ and $D_{2}$ is an isomorphism between infinite dimensional subspaces of ${\mathcal{P}}({\reals})$.\\ 
(ii) The domain space of $D_{1}$ contains the polynomial Kernel of $D$. \\
(iii) The codomain space of $D_{1}$ contains the space of residual polynomials, that is an algebraic complement of the polynomial range of $D$ spanned by $\{x^{s}\}$, where $s$ ranges over the set $S$ of inaccessible index. 

Such a decomposition of $D$ arises from the determination of a nonnegative integer $N$, that is the greatest root of a polynomial equation in which the unknown varies over the set $\naturals$ (the set of natural numbers including zero). If such $N$ does not exist, then $D$ is identical to $D_{2}$ and the matrix representation of $D$ is in LREF relative to the usual polynomial basis ${\mbox{\boldmath $\mathrm{x}$}}=\{x^{n},\;\;n\in \naturals\}$. Otherwise, the LREF of the matrix representation of $D$ is constructed after a finite sequence of row elementary operations applied to the matrix associated with $D_{1}$. In both cases we conveniently derive a matrix representation of $D$ in LREF coupled with a standard polynomial basis including a basis of the polynomial kernel of $D$. In Ortiz' recurrence for generating canonical polynomials, the terms of the polynomial basis $\mbox{\boldmath $\mathrm{x}$}$ are replaced by corresponding terms of a standard polynomial basis and the coefficients, which take place in this formula, match the nonzero entries of the LREF representation of $D$. The outcome of this formula is a basis of CPs, that is a \emph{OC}-basis of the entire space of polynomials, partitioned into a complete sequence of CPs and a basis of the polynomial kernel of $D$. In view of Bunchaft's classification of CPs the modified recursive formula derived in this paper results in a sequence consisting of all primary-generic CPs along with a self-operative selection of primary-singular and derived CPs so as to form a complete sequence of such polynomials. As a consequence a simple criterion is introduced, which makes it possible to establish whether or not every complete sequence of CPs associated with a given $D$ contains at least one derived-singular CP.

The two-factor decomposition of $D$, mentioned above, enables us to show that the height $h$ of $D$ is intrinsically related to a more general quantity, called the index of the operator, which characterizes operators of finite Kernel index and deficiency. It turns out that the number of tau parameters must be the constant integer $h+\nu$ for any order of approximation $n>N$.

Taking advantage of programming Languages which support symbolic computations, the recursive procedure discussed in this paper directly leads to a computer program for the full recursive implementation of the tau method, which additionally preserves the above-mentioned benefits regarding CPs (see Examples 1,2). An extension of the approach presented in this paper to cover the case of systems of differential equations determined by matrix differential operators with polynomial coefficients is presented in \cite{Pa:sy}.

In a series of papers~\cite{El:un, El:co, El:Ga, El:an}  El-Daou et al. simulated a variety of discrete and continuous numerical methods, such as finite difference methods, collocation schemes, spectral techniques and the tau method, through a systematic use of the recursive formulation of the latter as the main analytical tool. In this context our results are connected with a fairly large family of numerical methods aimed at the full recursive construction of their approximate solution in terms of bases of canonical polynomials.

\section{Ortiz Canonical Bases and Infinite Matrices in Echelon Form}
\label{sec:TheRecursiveGenerationOfOrtizCanonicalBasesViaInfiniteMatricesInEchelonForm}

In this Section we collect some results arising out of the recursive approach to the solution of abstract linear equations demonstrated by the author in \cite{Pa:reap}. 

In all that follows $X, Y$ will stand for vector spaces of arbitrary (finite or infinite) Hamel dimension over the same scalar field $\mathcal{F}$ (the field $\complex$ of complex numbers or any subfield of $\complex$). The space of linear mappings (or homomorphisms) from $X$ into $Y$ over $\mathcal{F}$ will be denoted by $\mbox{$\mathrm{Hom}$}_{\mathcal{F}}(X,Y)$. Also $\mbox{$\mathrm{Im}$}(f)$, $\mbox{$\mathrm{Ker}$}(f)$ will stand for the \emph{range} (or \emph{image}) and the \emph{null space} (or \emph{kernel}) of $f\in \mbox{$\mathrm{Hom}$}_{\mathcal{F}}(X,Y)$ respectively. 
Let $\mbox{\boldmath $\mathrm{s}$}=(\mbox{\boldmath $\mathrm{s}$}_{k})_{k \in K}$, $\mbox{\boldmath $\mathrm{b}$}=(\mbox{\boldmath $\mathrm{b}$}_{i})_{i \in I}$ be Hamel bases of $X$, $Y$, respectively, well-ordered by $(K,\prec_{K})$, $(I,\prec_{I})$. Let also $\A=(\alpha_{ki})_{(k,i) \in K \times I}$ be the row finite matrix representation of $f\in \mbox{$\mathrm{Hom}$}_{\mathcal{F}}(X,Y)$, relative to $(\mbox{\boldmath $\mathrm{s}$},\: \mbox{\boldmath $\mathrm{b}$})$ determined by $f(\mbox{\boldmath $\mathrm{s}$}_{k})=\sum_{i\in I} \alpha_{ki}\mbox{\boldmath $\mathrm{b}$}_{i}$ for $k\in K$. A row finite matrix is uniquely determined by $f$ for an arbitrary but fixed pair of bases $(\mbox{\boldmath $\mathrm{s}$},\: \mbox{\boldmath $\mathrm{b}$})$ and vice versa (see \cite{Ja:Ab}). The row of index $i$ of $\A$ is denoted as $\A_i$.
The set $W$ will stand for the indexing set of zero rows of $\A$, that is $W=\{k\in K: f(\mbox{\boldmath $\mathrm{s}$}_{k})=0\}$ and the set $J$ for the set complement of $W$, that is $J=K\verb+\+W$. Let $\mbox{\boldmath $\mathrm{y}$}=\sum_{i  \in I} \alpha_{i}\mbox{\boldmath $\mathrm{b}$}_{i}$ be an element of $Y$ and $supp(\mbox{\boldmath $\mathrm{y}$})=\{i\in I: \alpha_{i}\not=0\}$ be the finite support of $\mbox{\boldmath $\mathrm{y}$}$. The set $supp(\mbox{\boldmath $\mathrm{y}$})$ has a unique greatest element, relative to $\prec_{I}$, and we shall refer to it as $gsup(\mbox{\boldmath $\mathrm{y}$})$, relative to \mbox{\boldmath $\mathrm{b}$}.
Let us further call $\sigma_{j}=gsup(f(\mbox{\boldmath $\mathrm{s}$}_{j}))$ for $j\in J$. Then we can write $f(\mbox{\boldmath $\mathrm{s}$}_{j})=\sum_{i\preceq_{I} \sigma_{j}} \alpha_{ji}\mbox{\boldmath $\mathrm{b}$}_{i}$ for all $j\in J$.

\subsection{Two Special Bases of Subspaces of a Vector Space}
\label{sec:TwoSpecialBasesOfSubspacesOfAVectorSpace}
Let $G$ be a subspace of $Y$. The set of inaccessible index associated with $G$, relative to an ordered basis $\mbox{\boldmath $\mathrm{b}$}=(\mbox{\boldmath $\mathrm{b}$}_{i})_{i \in I}$ of $Y$, is defined to be the set $S=\{s\in I: \:s\not=gsup(\mbox{\boldmath $\mathrm{g}$})\;\forall\mbox{\boldmath $\mathrm{g}$}\in G\}$. The set $S$ would serve to introduce two special types of ordered bases of $G$.

In all that follows, $S^{\prime}$ will stand for the set complement of $S$ with respect to $I$, that is $S^{\prime}=I\verb+\+ S$. Let us call ${\mathcal{M}}_{i}=\{\mbox{\boldmath $\mathrm{g}$}\in G:\;gsup(\mbox{\boldmath $\mathrm{g}$})=i, \mbox{$\mathrm{relative\; to}$}\  \mbox{\boldmath $\mathrm{b}$}\}$ for all $i\in S^{\prime}$. Let also $G\not=\{0\}$. Formally the family ${\mathcal{M}}=({\mathcal{M}}_{i})_{i\in S^{\prime}}$ is a covering of $G\verb+\+\{0\}$ consisting of pairwise disjoint and nonempty subsets of $G\verb+\+\{0\}$. As ${\mathcal{M}}_{i}\not=\emptyset$ for all $i\in S^{\prime}$, the axiom of choice implies the existence of a choice mapping ${\mathcal{C}}:{\mathcal{M}}\mapsto G$ such that ${\mathcal{C}}({\mathcal{M}}_{i})=\mbox{\boldmath $\mathrm{g}$}_{i}\in {\mathcal{M}}_{i}$ for all $i\in S^{\prime}$. As the terms of ${\mathcal{M}}$ are pairwise disjoint sets, it follows that ${\mathcal{C}}$ is injective and so the family $\mbox{\boldmath $\mathrm{g}$}=(\mbox{\boldmath $\mathrm{g}$}_{i})_{i\in S^{\prime}}$ in $G$ consists of non-repeating terms. Moreover as the terms of $\mbox{\boldmath $\mathrm{g}$}$ satisfy the defining relation 
\begin{equation}\label{eq-prestand} 
gsup(\mbox{\boldmath $\mathrm{g}$}_{i})=i 
\end{equation}
for all $i\in S^{\prime}$, relative to $\mbox{\boldmath $\mathrm{b}$}$, they can be written in the form
\begin{equation}\label{eq-stand1} 
	\mbox{\boldmath $\mathrm{g}$}_{i}=\sum_{k\preceq i} \lambda_{ik}\mbox{\boldmath $\mathrm{b}$}_{k}
\end{equation}
for $\lambda_{ik}\in {\mathcal{F}}$ with $\lambda_{ii}\not=0$. It turns out that the family $(\mbox{\boldmath $\mathrm{g}$}_{i})_{i\in S^{\prime}}\cup (\mbox{\boldmath $\mathrm{b}$}_{s})_{s\in S}$ is a basis of $Y$ and the family ${\mathcal{G}}=(\mbox{\boldmath $\mathrm{g}$}_{i})_{i\in S^{\prime}}$ is a basis of $G$ (see \cite[Lemma 1]{Pa:reap}). We shall referred to this basis as \emph{full-index basis} of $Y$ (resp. $G$) relative to $\mbox{\boldmath $\mathrm{b}$}$. Moreover, the space ${\mathcal{R}}_{S}$ spanned by $(\mbox{\boldmath $\mathrm{b}$}_{s})_{s\in S}$ is a complementary space of $G$ termed the \emph{residual space} of $G$, relative to $\mbox{\boldmath $\mathrm{b}$}$  and we write formally for it
\begin{equation}\label{eq-fres} 
	Y=G\oplus \mathcal{R_{S}}. 
\end{equation}
According to (\ref{eq-fres}), for every $i\in S^{\prime}$, $\mbox{\boldmath $\mathrm{b}$}_{i}$ can be expressed uniquely in the form
\begin{equation} \label{eq-Noe}
	\mbox{\boldmath $\mathrm{b}$}_{i}=\mbox{\boldmath $\mathrm{n}$}_{i}-\mbox{\boldmath $\mathrm{r}$}_{i},
\end{equation}
where $\mbox{\boldmath $\mathrm{n}$}_{i}\in G$ with $\mbox{\boldmath $\mathrm{n}$}_{i}\not=0$ and $\mbox{\boldmath $\mathrm{r}$}_{i}\in \mathcal{R_{S}}$. The family $\mbox{\boldmath $\mathrm{r}$}=(\mbox{\boldmath $\mathrm{r}$}_{i})_{i\in S^{\prime}}$ with $\mbox{\boldmath $\mathrm{r}$}_{i}\in\mathcal{R_{S}}$ will be referred to as the family of residual vectors (\emph{R}-vectors for short) associated with $G$ relative to $\mbox{\boldmath $\mathrm{b}$}$. As $\mathcal{R_{S}}$ is a complementary space of $G$, it follows from Noether's Theorem (see \cite{Ja:Ab}) that the family ${\mathcal{N}}=(\mbox{\boldmath $\mathrm{n}$}_{i})_{i\in S^{\prime}}$, generated by (\ref{eq-Noe}), is a basis of $G$, named \emph{Noether basis} of $G$ relative to $\mbox{\boldmath $\mathrm{b}$}$.

\subsection{Standard Bases and Ortiz Canonical Bases}
\label{sec:StandardBasesAndOrtizCanonicalBasesAssociatedWithAbstractLinearMappings}

Let $\mbox{$\mathrm{Hom}$}_{\mathcal{F}}(X,Y)$ be the space of linear mappings (or homomorphisms) from $X$ into $Y$ over the same scalar field $\mathcal{F}$. Let also $f\in \mbox{$\mathrm{Hom}$}_{\mathcal{F}}(X,Y)$. The kernel (or null space) and the range (or image) of $f$ will be denoted by $\mbox{$\mathrm{Ker}$}(f),\;\mbox{$\mathrm{Im}$}(f)$ respectively and $\tilde{f}$ will stand for the canonical isomorphism induced by $f$, that is $\tilde{f}: X/\mbox{$\mathrm{Ker}$}(f)\mapsto\mbox{$\mathrm{Im}$}(f)$, $\tilde{f}([\mbox{\boldmath $\mathrm{x}$}])=f(\mbox{\boldmath $\mathrm{x}$})$. In what follows, we shall apply the results of the preceding paragraph with $G=\mbox{$\mathrm{Im}$}(f)$ and $f\neq 0$. Let $\mbox{\boldmath $\mathrm{b}$}=(\mbox{\boldmath $\mathrm{b}$}_{i})_{i \in I}$ be an ordered basis of $Y$. Since  $\mbox{$\mathrm{Im}$}(f)\neq \{0\}$, it follows that $S$ is a proper subset of $I$ and $S^{\prime}\neq\emptyset$. Accordingly, the full-index basis ${\mathcal{G}}$ and the Noether basis  ${\mathcal{N}}$ of $\mbox{$\mathrm{Im}$}(f)$, relative to $\mbox{\boldmath $\mathrm{b}$}$ are non-empty sets. The inverse mapping of $\tilde{f}$, as being an isomorphism, generates the families of cosets $(\tilde{f}^{-1}(\mbox{\boldmath $\mathrm{g}$}_{i}))_{i\in S^{\prime}}$ and $(\tilde{f}^{-1}(\mbox{\boldmath $\mathrm{n}$}_{i}))_{i\in S^{\prime}}$, which are formally bases of $X/\mbox{$\mathrm{Ker}$}(f)$. The former will be referred to as a \emph{standard basis of cosets} and the latter as a \emph{basis of Lanczos cosets} associated with $f$, relative to $\mbox{\boldmath $\mathrm{b}$}$. Such bases consist of pairwise disjoint and nonempty sets and so, by virtue of the axiom of choice, there exist injective choice mappings, which generate a \emph{standard family of vectors} and a \emph{family of Ortiz canonical vectors} (family of \emph{OC}-vectors) associated with $f$, relative to $\mbox{\boldmath $\mathrm{b}$}$, respectively. It turns out that the so generated families consist of terms indexed by $S^{\prime}$ with none repeated. Moreover such families are linearly independent and span complementary spaces of $\mbox{$\mathrm{Ker}$}(f)$. 

Let $\mbox{\boldmath $\mathrm{s}$}_{i}$ be the $i$-th term of a standard family, say $\mbox{\boldmath $\mathrm{s}$}$, associated with $f$, relative to $\mbox{\boldmath $\mathrm{b}$}$. In view of (\ref{eq-prestand}), we have
\begin{equation}\label{eq-standfa1} 
gsup\;f(\mbox{\boldmath $\mathrm{s}$}_{i})=i, 
\end{equation}
whence
\begin{equation}\label{eq-standfa}
	f(\mbox{\boldmath $\mathrm{s}$}_{i})=\sum_{k\preceq i} \lambda_{ik}\mbox{\boldmath $\mathrm{b}$}_{k},
\end{equation}
for some $\lambda_{ik}\in {\mathcal{F}}$ with $\lambda_{ii}\not=0$. Let $\mbox{\boldmath $\mathrm{q}$}_{i}$  be the $i$-th term of a family of \emph{OC}-vectors, say $\mbox{\boldmath $\mathrm{q}$}$, associated with $f$, relative to $\mbox{\boldmath $\mathrm{b}$}$. It follows that $f(\mbox{\boldmath $\mathrm{q}$}_{i})=\mbox{\boldmath $\mathrm{n}$}_{i}$, or in view of (\ref{eq-Noe}),
\begin{equation}\label{eq-can2}
	f(\mbox{\boldmath $\mathrm{q}$}_{i})=\mbox{\boldmath $\mathrm{b}$}_{i}+\mbox{\boldmath $\mathrm{r}$}_{i}.
\end{equation}
As $\tilde{f}^{-1}$ is injective, the uniqueness of Noether bases defined by (\ref{eq-Noe}) entails the following result. 
\begin{theorem}\label{theo-uniq} A basis of Lanczos cosets ${\mathcal{L}}=({\mathcal{L}}_{i})_{i\in S^{\prime}}$ and a family of residual vectors $\mbox{\boldmath $\mathrm{r}$}=(\mbox{\boldmath $\mathrm{r}$}_{i})_{i\in S^{\prime}}$ are uniquely associated with $f\in \mbox{$\mathrm{Hom}$}_{\mathcal{F}}(X,Y)$ for a fixed but arbitrary ordered basis $\mbox{\boldmath $\mathrm{b}$}$ of $Y$. 
\end{theorem}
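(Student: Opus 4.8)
The plan is to trace the uniqueness back to the direct sum decomposition (\ref{eq-fres}) and to separate carefully those objects that genuinely depend on a choice mapping from those that do not. First I would observe that the residual space $\mathcal{R_{S}}$ is determined solely by the set $S$ of inaccessible index, and that $S$ in turn depends only on $G=\mbox{$\mathrm{Im}$}(f)$ and on the fixed basis $\mbox{\boldmath $\mathrm{b}$}$ through its defining condition; no appeal to the axiom of choice enters here. Consequently the decomposition $Y=\mbox{$\mathrm{Im}$}(f)\oplus\mathcal{R_{S}}$ of (\ref{eq-fres}) is itself uniquely associated with $f$ and $\mbox{\boldmath $\mathrm{b}$}$, together with the pair of projections it induces.

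Next I would invoke the uniqueness of direct-sum decompositions. For each $i\in S^{\prime}$, the vector $\mbox{\boldmath $\mathrm{b}$}_{i}$ admits exactly one expression of the form (\ref{eq-Noe}), namely $\mbox{\boldmath $\mathrm{b}$}_{i}=\mbox{\boldmath $\mathrm{n}$}_{i}-\mbox{\boldmath $\mathrm{r}$}_{i}$ with $\mbox{\boldmath $\mathrm{n}$}_{i}\in\mbox{$\mathrm{Im}$}(f)$ and $\mbox{\boldmath $\mathrm{r}$}_{i}\in\mathcal{R_{S}}$, since $\mbox{\boldmath $\mathrm{n}$}_{i}$ and $-\mbox{\boldmath $\mathrm{r}$}_{i}$ are precisely the two components of $\mbox{\boldmath $\mathrm{b}$}_{i}$ under the projections attached to (\ref{eq-fres}). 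This already delivers the uniqueness of the Noether basis ${\mathcal{N}}=(\mbox{\boldmath $\mathrm{n}$}_{i})_{i\in S^{\prime}}$ and of the family of residual vectors $\mbox{\boldmath $\mathrm{r}$}=(\mbox{\boldmath $\mathrm{r}$}_{i})_{i\in S^{\prime}}$, without recourse to any choice function.

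Finally I would pass to the cosets. Since $\tilde{f}$ is the canonical isomorphism induced by $f$, and hence is independent of any choice mapping, its inverse $\tilde{f}^{-1}$ is a well-defined bijection; applying it to the already-unique vectors $\mbox{\boldmath $\mathrm{n}$}_{i}$ yields cosets ${\mathcal{L}}_{i}=\tilde{f}^{-1}(\mbox{\boldmath $\mathrm{n}$}_{i})$ that are themselves uniquely determined, while the injectivity of $\tilde{f}^{-1}$ additionally guarantees that distinct indices produce distinct $\mbox{\boldmath $\mathrm{n}$}_{i}$ and hence distinct $\,{\mathcal{L}}_{i}$. Thus the basis of Lanczos cosets ${\mathcal{L}}=({\mathcal{L}}_{i})_{i\in S^{\prime}}$ is uniquely associated with $f$ and $\mbox{\boldmath $\mathrm{b}$}$.

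The point that requires care — and is, I expect, the real content of the statement rather than any difficult computation — is to resist proving more than is true. The full-index basis ${\mathcal{G}}$, the standard family, and the individual \emph{OC}-vectors $\mbox{\boldmath $\mathrm{q}$}_{i}$ all rest on choice mappings and are genuinely non-unique, differing for instance by elements of $\mbox{$\mathrm{Ker}$}(f)$. What survives is exactly the layer of objects pinned down by the direct sum (\ref{eq-fres}): the Noether basis, the residual vectors, and the Lanczos \emph{cosets}, as opposed to any representatives chosen from within those cosets. Drawing this boundary precisely is where the argument must be handled with attention.
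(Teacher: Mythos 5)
Your proposal is correct and follows essentially the same route as the paper: the paper likewise derives the result from the uniqueness of the decomposition $\mbox{\boldmath $\mathrm{b}$}_{i}=\mbox{\boldmath $\mathrm{n}$}_{i}-\mbox{\boldmath $\mathrm{r}$}_{i}$ afforded by (\ref{eq-fres}) together with the injectivity of $\tilde{f}^{-1}$. Your additional remarks on which objects are choice-dependent merely make explicit what the paper leaves implicit.
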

If $f$ is not injective, then $\mbox{$\mathrm{Ker}$}(f)\neq \{0\}$ and so each coset ${\mathcal{L}}_{i}$, $i\in S^{\prime}$, consists of multiple \emph{OC}-vectors of the same index satisfying each distinct equation (\ref{eq-can2}). It turns out that, in the general case, there exist multiple families of \emph{OC}-vectors. 
However, by virtue of Theorem \ref{theo-uniq}, we deduce the following property.
\begin{corollary} \label{cor-uniq}
Any two terms of two arbitrary families of OC-vectors associated with $f$, relative to $\mbox{\boldmath $\mathrm{b}$}$, have the same index in $S^{\prime}$ if and only if they differ by an element of $\mbox{$\mathrm{Ker}$}(f)$. 
\end{corollary}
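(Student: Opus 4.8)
The plan is to reduce everything to the uniqueness furnished by Theorem \ref{theo-uniq}. Fix the ordered basis $\mbox{\boldmath $\mathrm{b}$}$ and let $\mbox{\boldmath $\mathrm{q}$}$ and $\mbox{\boldmath $\mathrm{q}$}'$ be any two families of \emph{OC}-vectors associated with $f$. By construction, a term of index $i$ of any such family lies in the coset $\tilde{f}^{-1}(\mbox{\boldmath $\mathrm{n}$}_{i})$, so it satisfies $f(\mbox{\boldmath $\mathrm{q}$}_{i})=\mbox{\boldmath $\mathrm{n}$}_{i}$, equivalently (\ref{eq-can2}). The observation I would stress first is that, by Theorem \ref{theo-uniq}, the Noether basis ${\mathcal{N}}=(\mbox{\boldmath $\mathrm{n}$}_{i})_{i\in S^{\prime}}$ (equivalently the family $\mbox{\boldmath $\mathrm{r}$}$) is uniquely determined by $f$ and $\mbox{\boldmath $\mathrm{b}$}$ alone, and does not depend on which family of \emph{OC}-vectors is chosen. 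This is what legitimizes comparing a term drawn from $\mbox{\boldmath $\mathrm{q}$}$ with a term drawn from $\mbox{\boldmath $\mathrm{q}$}'$: a term of index $i$ from either family is sent by $f$ to the \emph{same} vector $\mbox{\boldmath $\mathrm{n}$}_{i}$.

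The forward implication is then immediate. If a term $\mbox{\boldmath $\mathrm{q}$}_{i}$ of $\mbox{\boldmath $\mathrm{q}$}$ and a term $\mbox{\boldmath $\mathrm{q}$}'_{j}$ of $\mbox{\boldmath $\mathrm{q}$}'$ carry the same index, $i=j$, then applying $f$ gives $f(\mbox{\boldmath $\mathrm{q}$}_{i})=\mbox{\boldmath $\mathrm{n}$}_{i}=f(\mbox{\boldmath $\mathrm{q}$}'_{i})$, whence $f(\mbox{\boldmath $\mathrm{q}$}_{i}-\mbox{\boldmath $\mathrm{q}$}'_{i})=0$ and $\mbox{\boldmath $\mathrm{q}$}_{i}-\mbox{\boldmath $\mathrm{q}$}'_{i}\in\mbox{$\mathrm{Ker}$}(f)$. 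For the converse, suppose two terms $\mbox{\boldmath $\mathrm{q}$}_{i}$ and $\mbox{\boldmath $\mathrm{q}$}'_{j}$ differ by an element of $\mbox{$\mathrm{Ker}$}(f)$; then $f(\mbox{\boldmath $\mathrm{q}$}_{i})=f(\mbox{\boldmath $\mathrm{q}$}'_{j})$, i.e. $\mbox{\boldmath $\mathrm{n}$}_{i}=\mbox{\boldmath $\mathrm{n}$}_{j}$, and it remains to deduce $i=j$, that is, to verify the injectivity of the indexing $i\mapsto\mbox{\boldmath $\mathrm{n}$}_{i}$.

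That injectivity is the only point needing an argument beyond bookkeeping, and I would obtain it from the direct sum (\ref{eq-fres}). By (\ref{eq-Noe}) each $\mbox{\boldmath $\mathrm{n}$}_{i}=\mbox{\boldmath $\mathrm{b}$}_{i}+\mbox{\boldmath $\mathrm{r}$}_{i}$ with $\mbox{\boldmath $\mathrm{r}$}_{i}\in\mathcal{R_{S}}$, so $\mbox{\boldmath $\mathrm{n}$}_{i}=\mbox{\boldmath $\mathrm{n}$}_{j}$ forces $\mbox{\boldmath $\mathrm{b}$}_{i}-\mbox{\boldmath $\mathrm{b}$}_{j}=\mbox{\boldmath $\mathrm{r}$}_{j}-\mbox{\boldmath $\mathrm{r}$}_{i}\in\mathcal{R_{S}}=\Span(\mbox{\boldmath $\mathrm{b}$}_{s})_{s\in S}$; since $i,j\in S^{\prime}$ and $S\cap S^{\prime}=\emptyset$, the left-hand side has vanishing coordinates on $S$, so it can lie in $\mathcal{R_{S}}$ only if it is the zero vector, which for basis elements means $i=j$. (Equivalently, one may simply invoke that ${\mathcal{N}}$, being a basis indexed by $S^{\prime}$, is a family of distinct vectors.) I expect no genuine obstacle in the computation; the single subtle point is the one highlighted in the first paragraph, namely that a comparison across two distinct families is meaningful precisely because Theorem \ref{theo-uniq} pins down $\mbox{\boldmath $\mathrm{n}$}_{i}$ independently of the chosen family. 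The whole statement is thus essentially a restatement of that uniqueness coupled with the injectivity of the indexing of ${\mathcal{N}}$.
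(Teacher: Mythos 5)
Your argument is correct and follows exactly the route the paper intends: the paper gives no explicit proof, merely asserting the corollary ``by virtue of Theorem \ref{theo-uniq}'', and your write-up is the natural expansion of that citation --- using the uniqueness of the Noether basis to make the cross-family comparison meaningful, and the injectivity of $i\mapsto\mbox{\boldmath $\mathrm{n}$}_{i}$ (via the direct sum (\ref{eq-fres})) for the converse. No gaps; the injectivity step you single out is indeed the only point requiring more than bookkeeping.
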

Let $\mbox{\boldmath $\mathrm{u}$}=(\mbox{\boldmath $\mathrm{u}$}_{w})_{w\in W}$ be a basis of $\mbox{$\mathrm{Ker}$}(f)$. As $\mbox{\boldmath $\mathrm{s}$}$ and $\mbox{\boldmath $\mathrm{q}$}$ span complementary subspaces of $\mbox{$\mathrm{Ker}$}(f)$, respectively, it follows that their  extensions $\tilde{\mbox{\boldmath $\mathrm{s}$}}=\mbox{\boldmath $\mathrm{s}$}\cup \mbox{\boldmath $\mathrm{u}$}$ and $\tilde{\mbox{\boldmath $\mathrm{q}$}}=\mbox{\boldmath $\mathrm{q}$}\cup \mbox{\boldmath $\mathrm{u}$}$ are bases of $X$. We shall refer to these bases as a standard basis and a \emph{OC}-basis associated with $f$, relative to $\mbox{\boldmath $\mathrm{b}$}$, respectively. The indexing set of $\tilde{\mbox{\boldmath $\mathrm{s}$}}$ and $\tilde{\mbox{\boldmath $\mathrm{q}$}}$ is the disjoint union of the sets $W,\:S^{\prime}$, which will be denoted by $W\coprod S^{\prime}$. Let $\prec_{W}$ be a well-ordering on $W$. Then the families $\tilde{\mbox{\boldmath $\mathrm{s}$}}$ and $\tilde{\mbox{\boldmath $\mathrm{q}$}}$ will be considered as ordered bases equipped with the conjoint well-ordering $^{W}\!\lhd=\prec_{W} \cup \prec_{S^{\prime}}\cup (W\times S^{\prime})$. According to this ordering the elements of $W$ are predecessors of the elements of $S^{\prime}$.
\subsection{Existence and Solution  Representation}
\label{sec:ExistenceAndRepresentationOfSolutionsOfLinearEquations}
Let ${\mbox{\boldmath ${y}$}}=\sum_{i\in I} \lambda_{i}\mbox{\boldmath $\mathrm{b}$}_{i}$ be an arbitrary element of $Y$ and $\mbox{\boldmath $\mathrm{r}$}=(\mbox{\boldmath $\mathrm{r}$}_{i})_{i \in S^{\prime}}$ be the family of \emph{R}-vectors associated with $f$ relative to $\mbox{\boldmath $\mathrm{b}$}$. Then $\mbox{\boldmath ${y}$}$ belongs to the range of $f$ if and only if 
\begin{equation}\label{eq-exis1}
	\sum_{s\in S}\lambda_{s}\mbox{\boldmath $\mathrm{b}$}_{s}=\sum_{i\in S^{\prime}}\lambda_{i}\mbox{\boldmath $\mathrm{r}$}_{i}
\end{equation}
(see \cite[Theorem 7]{Pa:reap}). Let us write $\mbox{\boldmath $\mathrm{r}$}_{i}=\sum_{s\in S}r_{is}\mbox{\boldmath $\mathrm{b}$}_{s}$. In view of (\ref{eq-exis1}) we have 
\[\begin{array}{rllr}
	\sum_{s\in S}\lambda_{s}\mbox{\boldmath $\mathrm{b}$}_{s}\vspace{0.06in}&=&\sum_{i\in S^{\prime}}\sum_{s\in S} \lambda_{i}r_{is}\mbox{\boldmath $\mathrm{b}$}_{s}\\
\vspace{0.06in}&=&\sum_{s\in S}(\sum_{i\in S^{\prime}}\lambda_{i}r_{is})\mbox{\boldmath $\mathrm{b}$}_{s}.
\end{array}\]
By combining coefficients we conclude that $\mbox{\boldmath ${y}$}$ belongs to the range of $f$ if and only if the following relations hold
\begin{equation}\label{eq-parm}
	\lambda_{s}=\sum_{i\in S^{\prime}}\lambda_{i}r_{is},
\end{equation}
for all $s\in S$.	According to (\ref{eq-parm}) every vector in the range of $f$ must be in the form
\begin{equation}\label{eq-rhs}
\mbox{\boldmath ${y}$}=\sum_{i\in S^{\prime}} \lambda_{i}\mbox{\boldmath $\mathrm{b}$}_{i}+\sum_{s\in S} \sum_{i\in S^{\prime}} \lambda_{i}r_{is}\mbox{\boldmath $\mathrm{b}$}_{s},
\end{equation}
where $\lambda_{i}$ are free scalars in ${\mathcal{F}}$ for all $i\in S^{\prime}$. Let $\mbox{\boldmath ${y}$}$ be as in (\ref{eq-rhs}), let also $\mbox{\boldmath $\mathrm{q}$}=(\mbox{\boldmath $\mathrm{q}$}_{i})_{i \in S^{\prime}}$ be a family of \emph{OC}-vectors associated with $f$ relative to $\mbox{\boldmath $\mathrm{b}$}$ and $\mbox{\boldmath $\mathrm{u}$}=(\mbox{\boldmath $\mathrm{u}$}_{w})_{w\in W}$ be a basis of $\mbox{$\mathrm{Ker}$}(f)$. Then the general solution $\mbox{\boldmath ${x}$}$ of (\ref{eq-abstract}) expressed in terms of an \emph{OC}-basis yields the same coefficients $\lambda_{i}$ for all $i\in S^{\prime}$ as the corresponding coefficients of (\ref{eq-rhs}) and so $\mbox{\boldmath ${x}$}$ takes the form
\begin{equation}\label{eq-repr}
	\mbox{\boldmath ${x}$}=\sum_{i\in S^{\prime}}\lambda_{i}\mbox{\boldmath $\mathrm{q}$}_{i}+ \sum_{w\in W} C_{w}\mbox{\boldmath $\mathrm{u}$}_{w}
\end{equation}
where $C_{w}$ are free scalars.
\subsection{Lower Row Echelon Forms and  the Generalized Recursive Formula}
\label{sec:MatricesInREFAndRecurrence}
An order monomorphism $\sigma : K\ni k\mapsto \sigma(k)\in I$ between well-ordered sets is a strict order preserving mapping, that is if $k \prec_{K} m$, then $\sigma(k) \prec_{I} \sigma(m)$. An order monomorphism that is also surjective (or onto) is called order isomorphism. Next we define the LREF of a row finite matrix indexed by well-ordered sets.
\begin{definition} \label{def-ech} A row finite matrix $\A=(\alpha_{ki})_{(k,i)\in K \times I}$ is said to be in lower row echelon form relative to $(K,\prec_{K})$, $(I,\prec_{I})$ if it fulfills the following properties\emph{:}\\
\emph{i)} $\alpha_{j\sigma_{j}}=1$, for all $j \in J$. \emph{(}The last nonzero element in every nonzero row is the scalar \emph{1)}.\\
\emph{ii)} If $\alpha_{ki}=0$ for some $k \in K$ and for all $i \in I$, then $\alpha_{mi}=0$ for all $m \prec_{K} k$ and for all $i \in I$.  \emph{(}All the predecessors of a zero row are zero rows \emph{)}.\\
\emph{iii)} The mapping $\sigma : J \ni j \mapsto \sigma_{j}=gsup(\A_{j})\in I$ is an order monomorphism. \emph{(}For every nonzero row the last nonzero scalar of any successive row occurs further to the right\emph{)}.
\end{definition}
A matrix satisfying properties ii) and iii) is said to be in \emph{pre}-LREF. It follows directly from (\ref{eq-standfa}) that the action of $f$ on a standard basis results in a matrix in \emph{pre}-LREF.  Some properties of infinite matrices in \emph{pre}-LREF (or LREF), shown in \cite[Lemma 2]{Pa:reap}, are described in what follows. 
\begin{proposition}\label{lem-prop} Let $\A=(\alpha_{ki})_{(k,i) \in K \times I}$ be the matrix representation of $f$ relative to $(\mbox{\boldmath $\mathrm{s}$},\: \mbox{\boldmath $\mathrm{b}$})$ in pre-\emph{LREF}. Let also  $S$ be the  inaccessible index set of $f$, relative to $\mbox{\boldmath $\mathrm{b}$}$. Then\\
\emph{i)} The set complement $S'$ of $S$ satisfies $S^{\prime}=\mbox{$\mathrm{Im}$}(\sigma)$; that amounts to the same to say that $\sigma: J\ni j \mapsto \sigma_{j}\in S^{\prime}$ is surjective.\\
\emph{ii)} The family $(f(\mbox{\boldmath $\mathrm{s}$}_{j}))_{j \in J}$ is a basis of \emph{Im}$(f)$. \\ 
\emph{iii)} The family $(\mbox{\boldmath $\mathrm{s}$}_{w})_{w \in W}$ is a basis of \emph{Ker}$(f)$. 
\end{proposition}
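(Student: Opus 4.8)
The plan is to establish part (ii) first, since both (i) and (iii) will follow from the linear independence it provides. Spanning is immediate: the family $(f(\mbox{\boldmath $\mathrm{s}$}_{k}))_{k\in K}$ spans $\mbox{$\mathrm{Im}$}(f)$ because $\mbox{\boldmath $\mathrm{s}$}$ is a basis of $X$, and the terms indexed by $W$ drop out since $f(\mbox{\boldmath $\mathrm{s}$}_{w})=0$ there; hence $(f(\mbox{\boldmath $\mathrm{s}$}_{j}))_{j\in J}$ already spans $\mbox{$\mathrm{Im}$}(f)$. For linear independence I would run the standard echelon argument keyed on the leading indices $\sigma_{j}$. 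By property iii) of Definition \ref{def-ech} the map $\sigma$ is an order monomorphism, so the $\sigma_{j}$ are pairwise distinct, and from the representation $f(\mbox{\boldmath $\mathrm{s}$}_{j})=\sum_{i\preceq_{I}\sigma_{j}}\alpha_{ji}\mbox{\boldmath $\mathrm{b}$}_{i}$ each $f(\mbox{\boldmath $\mathrm{s}$}_{j})$ is supported on $\{i:i\preceq_{I}\sigma_{j}\}$ with $\alpha_{j\sigma_{j}}\neq 0$. Given a finite vanishing combination $\sum_{j\in F}c_{j}f(\mbox{\boldmath $\mathrm{s}$}_{j})=0$ with $F\subseteq J$, I would let $j_{0}\in F$ be the index maximizing $\sigma_{j}$ with respect to $\prec_{I}$ (unique, since $\sigma$ is injective on $J$); then the coefficient of $\mbox{\boldmath $\mathrm{b}$}_{\sigma_{j_{0}}}$ on the left is exactly $c_{j_{0}}\alpha_{j_{0}\sigma_{j_{0}}}$, because every other $f(\mbox{\boldmath $\mathrm{s}$}_{j})$ in the sum has greatest support strictly below $\sigma_{j_{0}}$ and therefore vanishes at the $\sigma_{j_{0}}$-th coordinate. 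Hence $c_{j_{0}}=0$; removing $j_{0}$ and iterating forces all $c_{j}=0$, which proves (ii).

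Part (iii) would then be a short deduction. Each $\mbox{\boldmath $\mathrm{s}$}_{w}$, $w\in W$, lies in $\mbox{$\mathrm{Ker}$}(f)$ by the definition of $W$, and $(\mbox{\boldmath $\mathrm{s}$}_{w})_{w\in W}$ is linearly independent as a subfamily of the basis $\mbox{\boldmath $\mathrm{s}$}$. To see it spans, I would take $\mbox{\boldmath $\mathrm{x}$}\in\mbox{$\mathrm{Ker}$}(f)$, expand it as a finite combination $\mbox{\boldmath $\mathrm{x}$}=\sum_{k\in K}c_{k}\mbox{\boldmath $\mathrm{s}$}_{k}$, and apply $f$. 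Since the $W$-terms die, $0=f(\mbox{\boldmath $\mathrm{x}$})=\sum_{j\in J}c_{j}f(\mbox{\boldmath $\mathrm{s}$}_{j})$, and the linear independence from (ii) forces $c_{j}=0$ for every $j\in J$. Thus $\mbox{\boldmath $\mathrm{x}$}$ is a combination of the $\mbox{\boldmath $\mathrm{s}$}_{w}$ alone, establishing (iii).

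For part (i) I would prove the two inclusions separately. The inclusion $\mbox{$\mathrm{Im}$}(\sigma)\subseteq S'$ is immediate from the definition of the inaccessible set: each $\sigma_{j}=gsup(f(\mbox{\boldmath $\mathrm{s}$}_{j}))$ is the greatest support of the nonzero element $f(\mbox{\boldmath $\mathrm{s}$}_{j})\in\mbox{$\mathrm{Im}$}(f)$, so $\sigma_{j}\notin S$ and hence $\sigma_{j}\in S'$. For the reverse inclusion, take $i\in S'$; by definition of $S$ there is some $\mbox{\boldmath $\mathrm{g}$}\in\mbox{$\mathrm{Im}$}(f)$ with $gsup(\mbox{\boldmath $\mathrm{g}$})=i$. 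Using (ii) I would expand $\mbox{\boldmath $\mathrm{g}$}=\sum_{j\in F}c_{j}f(\mbox{\boldmath $\mathrm{s}$}_{j})$ over a finite nonempty $F\subseteq J$ with all $c_{j}\neq 0$, and repeat the leading-index computation: with $j_{0}\in F$ maximizing $\sigma_{j}$, the $\sigma_{j_{0}}$-coordinate of $\mbox{\boldmath $\mathrm{g}$}$ equals $c_{j_{0}}\alpha_{j_{0}\sigma_{j_{0}}}\neq 0$ while no coordinate strictly above $\sigma_{j_{0}}$ is reached, so $gsup(\mbox{\boldmath $\mathrm{g}$})=\sigma_{j_{0}}$. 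Hence $i=\sigma_{j_{0}}\in\mbox{$\mathrm{Im}$}(\sigma)$, giving $S'\subseteq\mbox{$\mathrm{Im}$}(\sigma)$ and completing (i).

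The only point requiring genuine care is the echelon independence step underlying (ii), where one must use the well-ordering $\prec_{I}$ to select the maximal leading index inside a finite support and to guarantee that the rows with smaller leading index cannot interfere at that coordinate. Row-finiteness of $\A$ together with the finiteness of supports makes every combination encountered a finite one, so the argument reduces to the familiar triangular case and no infinitary subtlety arises; notably, property i) of Definition \ref{def-ech} (normalization of the leading entry to $1$) is not needed, only $\alpha_{j\sigma_{j}}\neq 0$, which holds by the very meaning of $\sigma_{j}=gsup(f(\mbox{\boldmath $\mathrm{s}$}_{j}))$. Everything else is bookkeeping built on this single mechanism.
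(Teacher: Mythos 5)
Your proof is correct. The paper does not actually prove Proposition \ref{lem-prop} --- it imports it from \cite[Lemma 2]{Pa:reap} --- so there is no in-paper argument to compare against; but your leading-index (triangular) argument, using only that $\sigma$ is an order monomorphism and that $\alpha_{j\sigma_{j}}\neq 0$ so that the maximal $\sigma_{j_{0}}$ in any finite combination isolates the coefficient $c_{j_{0}}$, is exactly the standard route, and it correctly yields (ii), then (iii) and both inclusions of (i) as consequences.
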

In the general case, $\mbox{$\mathrm{Im}$}(\sigma)\subset S^{\prime}$ and so $\sigma: J\ni j \mapsto \sigma_{j}\in S^{\prime}$ is not  surjective. However, in the case of matrices in \emph{pre}-LREF, by virtue of Proposition \ref{lem-prop} (i) and Definition \ref{def-ech} (iii), the mapping $\sigma: J\ni j \mapsto \sigma_{j}\in S^{\prime}$ is bijective and so $\sigma$ is an order isomorphism. Let us divide a basis $\mbox{\boldmath $\mathrm{s}$}$ associated with a matrix in \emph{pre}-LREF into two disjoint sets $\mbox{\boldmath $\mathrm{M}$}=\{\mbox{\boldmath $\mathrm{s}$}_{j}:\:j \in J\}$ and $\mbox{\boldmath $\mathrm{u}$}=\{\mbox{\boldmath $\mathrm{s}$}_{w}:\:w \in W\}$. Let also $\mbox{\boldmath $\mathrm{s}$}|_J$ be  the restriction of $\mbox{\boldmath $\mathrm{s}$}$ to $J$. As $\sigma, \mbox{\boldmath $\mathrm{s}$}|_J$ are bijections it follows that there exists a unique bijection $\mbox{\boldmath $\mathrm{m}$}$, which renders the diagram
\begin{equation} \label{eq-comm}
\begin{array}{rccl}
      &  &   \sigma      &                       \\
 &  J   &   \longrightarrow     &  S'    \\
\mbox{\boldmath $\mathrm{s}$}|_J & \downarrow &  \swarrow &\!\!\!\!\!\! \mbox{\boldmath $\mathrm{m}$} \\
      &  \mbox{\boldmath $\mathrm{M}$}  &              &          
\end{array}
\end{equation}
commutative. It turns out that the mapping $\mbox{\boldmath  $\mathrm{m}$}$ satisfies $\mbox{\boldmath $\mathrm{m}$}_{\sigma_{j}}=\mbox{\boldmath $\mathrm{s}$}_{j}$ for all $j\in J$. As $\sigma_{j}=gsup(f(\mbox{\boldmath $\mathrm{s}$}_{j}))=gsup(f(\mbox{\boldmath $\mathrm{m}$}_{\sigma_{j}}))$ for all $j\in J$, it follows from (\ref{eq-standfa1}) that $(\mbox{\boldmath $\mathrm{m}$}_{i})_{i\in S^{\prime}}$ is a standard family associated with $f$ relative to $\mbox{\boldmath $\mathrm{b}$}$. Whereas the terms of $\mbox{\boldmath $\mathrm{s}$}|_J$ have the same values  as the terms of $\mbox{\boldmath $\mathrm{m}$}$, but indexed by a different set, it can be identified with a standard family and the basis $\mbox{\boldmath $\mathrm{s}$}$ in Proposition \ref{lem-prop} can be identified with a standard basis associated with $f$ relative to $\mbox{\boldmath $\mathrm{b}$}$. Moreover the existence of standard bases ensures that every linear mapping on vector spaces of arbitrary dimensions is associated with at least one matrix in LREF for an arbitrary but fixed ordered basis of its codomain space. 

The ``existential approach" to the definition of families of \emph{OC}-vectors, based on the axiom of choice, enabled us to formulate and show the general theorems in \cite{Pa:reap}, however this approach is not efficient for computation. An alternative and constructive approach is introduced in connection with infinite row-finite systems in \cite{Pa:igjea}.

The following recursive formula provides a constructive procedure for choosing the terms of a family of \emph{OC}-vectors from Lanczos' cosets, whenever a matrix representation of $f$ in LREF is available. If the matrix representation $\A=(\alpha_{ki})_{(k,i) \in K \times I}$ of $f\in \mbox{$\mathrm{Hom}$}_{\mathcal{F}}(X,Y)$, relative to $(\mbox{\boldmath $\mathrm{s}$},\: \mbox{\boldmath $\mathrm{b}$})$, is in \emph{pre}-LREF, then the terms of a complete family of \emph{OC}-vectors (see \cite[Corollary 4]{Pa:reap}) are generated by 
\begin{equation} \label{eq-cre1}  \mbox{\boldmath $\mathrm{q}$}_{\sigma_{j}}= 
\alpha^{-1}_{j \sigma_{j}}({\mbox{\boldmath $\mathrm{s}$}}_{j} - \sum_{\scriptsize \begin{array}{c} i \prec \sigma_{j} \\ i \notin S\end{array}} \alpha_{j i} \mbox{\boldmath $\mathrm{q}$}_{i})
\end{equation}
for all $j\in J$.

Let us define $\mbox{\boldmath $\mathrm{q}$}_{w}\stackrel{def}{=}{\mbox{\boldmath $\mathrm{s}$}}_{w}$ for all $w\in W$. As $\{\mbox{\boldmath $\mathrm{s}$}_{w}:\:w \in W\}$ is a basis of Ker($f$), a \emph{OC}-basis is given by $\{\mbox{\boldmath $\mathrm{q}$}_{w},\; w\in W\}\cup \{\mbox{\boldmath $\mathrm{q}$}_{\sigma_{j}},\; j\in J\}$ and the indexing set of the corresponding ordered basis $\tilde{\mbox{\boldmath $\mathrm{q}$}}$ is the disjoint union of $W,S^{\prime}$ endowed with the conjoint well-ordering. Since $\alpha_{w i}=0$ (zero rows of $\A$) we can extend (\ref{eq-cre1}) so as to recover all of the terms $\mbox{\boldmath $\mathrm{q}$}_{w}$, $w\in W$, by defining in (\ref{eq-cre1}) $j=\sigma_{j}=w$ and $\alpha^{-1}_{w w}=1$ for all $w\in W$. It turns out that, in the context of matrix representations of linear mappings in LREF, the recursive relation (\ref{eq-cre1}) generates an \emph{OC}-basis.
\section{A Direct Sum Decomposition of Differential Operators}
\label{sec:ADirectDecompositionOfDifferentialOperators}
Throughout this paper $\mathcal{D}$ will stand for the class of differential operators of order $\nu$ with polynomial coefficients, the elements of which are defined by
\begin{equation}\label{eq-odo}
	D= \sum_{i=0}^{\nu} p_{i}(x) \frac{d^{i}}{dx^{i}}, \;\;\;\;  
	p_{i}(x)= \sum_{j=b_{i}}^{\alpha_{i}} p_{ij}x^{j}.
\end{equation}
Inasmuch as $D\in \mathcal{D}$ is an endomorphism of the polynomial space ${\mathcal{P}}({\reals})$, we shall use the notations $\mbox{$\mathrm{Ker}$}(D)$ and $\mbox{$\mathrm{Im}$}(D)$ for the polynomial kernel and the polynomial range of $D$ respectively. Moreover $\mbox{\boldmath $\mathrm{x}$}$ will stand for the usual polynomial basis $\{x^{n},\;\;n\in \naturals\}$ of ${\mathcal{P}}({\reals})$ ordered by the standard order by magnitude on $\naturals$, and $\deg\: p(x)$ for the degree of $p(x)\in {\mathcal{P}}({\reals})$. Certainly $gsup(p(x))=\deg\: p(x)$ relative to $\mbox{\boldmath $\mathrm{x}$}$.

Taking into account that  $\displaystyle \frac{d^{i}}{dx^{i}}x^{n}=\frac{n!}{(n-i)!}x^{n-i}$, the action of $D$ on $\mbox{\boldmath $\mathrm{x}$}$ results in a generating system of Im$(D)$ the terms of which are given by
\begin{equation} \label{eq-genp1}
D(x^{n})=\sum_{i=0}^{\nu}\sum_{j=b_{i}}^{\alpha_{i}}p_{ij} 
\frac{n!}{(n-i)!} x^{n-i+j}.
\end{equation}
The integers $h=max\{\alpha_{i}-i,\;\; i=0,1,...,\nu \}$ and $d=min\{b_{i}-i,\;\; i=0,1,...,\nu\}$, are called the height $h$ and the depth $d$ of $D$, respectively. Since $D(x^{n})$ is a polynomial of degree at most $n+h$ and lowest nonzero term with power of $x$ at least $n+d$, it follows that (\ref{eq-genp1}) can also be written in the form
\begin{equation} \label{eq-genp2}
          D(x^{n})=\sum_{k=d}^{h}\alpha_{n\;k+n}x^{k+n}.
\end{equation}
It is clear that if $h<0$, then $\alpha_{n\;k+n}=0$ for all $n<-h$. Let us call $\Pi=(\alpha_{nm})_{(n,m) \in \naturals\times \naturals}$ the matrix representation of $D$, relative to $(\mbox{\boldmath $\mathrm{x}$},\mbox{\boldmath $\mathrm{x}$})$, that is a row and column finite matrix. As the coefficients of $D(x^{n})$ occupy the entries of the rows of $\Pi$, it follows from (\ref{eq-genp2}) that the coefficients $\alpha_{n\;k+n}$, for $k<d$ or $k>h$ are all zero, whence $\Pi$ is a banded infinite matrix. 

Let us define $\xi: \naturals \mapsto {\reals}$ such that
$\xi(n)=\alpha_{n\;h+n}$. The terms of the sequence $(\xi(n))_{n\in \naturals}$\vspace{0.04in}, are the entries of the super-diagonal of $\Pi$. Moreover, it follows from (\ref{eq-genp1}) that the mapping $\xi$ is a real valued polynomial function in $n\in \naturals$. Accordingly $\Omega =\{n\in \naturals:\;\xi(n)=0\}$ is a finite set or empty. If $\Omega \neq \emptyset$, we then define $N=max\;\Omega$. If  $\Omega=\emptyset$, we then assign $N=-1$. In both cases we have $\xi(n)\not=0$ for all $n>N$, whence $\deg\: D(x^{n})=n+h$ for all $n>N$. 
The latter relation shows that the set, $S$, of inaccessible index associated with $D$, relative to $\mbox{\boldmath $\mathrm{x}$}$, is a subset of the finite initial interval of $\naturals$ determined by $N+h+1$, that is $\I(N+h+1)=\{0,1,...,N+h\}$. Applying (\ref{eq-can2}) with $\mbox{\boldmath $\mathrm{x}$}$ in place of $\mbox{\boldmath $\mathrm{b}$}$ and using polynomial notation, we recover Ortiz' original definition for canonical and residual polynomials
\begin{equation}\label{eq-can3} 
 D(\mbox{\boldmath $\mathrm{q}$}_{m}(x))=x^{m}+\mbox{\boldmath $\mathrm{r}$}_{m}(x), \ \ \  (m\in S^{\prime},\ \ \  \mbox{\boldmath $\mathrm{r}$}_{m}(x)\in \mathcal{R_{S}}).
\end{equation}

At this point let us recall a definition from linear algebra \cite[pp. 204]{Bou:Alg}.

\begin{definition} \label{def-decomp} Let $f\in \mbox{$\mathrm{Hom}$}_{\mathcal{F}}(X,Y)$ and $X_{1}, X_{2}$ be subspaces of $X$, let also $Y_{1}, Y_{2}$ be subspaces of $Y$ such that $X=X_{1}\oplus X_{2}$, $Y=Y_{1}\oplus Y_{2}$, where ``$\oplus$" stands for the internal direct sum. If $f_{k}$ is the restriction of $f$ to $X_{k}$ such that $f_{k}(X_{k})\subset Y_{k}$ for $k=1,2$, then we say that $f$ is the direct sum of $f_{k}$ and we write for it $f=f_{1}\oplus f_{2}$.
\end{definition}
Immediate consequences of the above definition are:\\
\[\mbox{$\mathrm{Ker}$}(f)=\mbox{$\mathrm{Ker}$}(f_{1})\oplus \mbox{$\mathrm{Ker}$}(f_{2})\  \mbox{$\mathrm{and}$}\  \mbox{$\mathrm{Im}$}(f)=\mbox{$\mathrm{Im}$}(f_{1})\oplus \mbox{$\mathrm{Im}$}(f_{2}). \]

Let $\mbox{\boldmath $\mathrm{z}$}=(\mbox{\boldmath $\mathrm{z}$}_{n}(x))_{n\in \naturals}$ be a sequence of polynomials. The subspace of ${\mathcal{P}}({\reals})$ spanned by $\mbox{\boldmath $\mathrm{z}$}$ will be denoted by $span(\mbox{\boldmath $\mathrm{z}$})$.
\begin{theorem}\label{theo-decomp} Let $D\in \mathcal{D}$. Let also $\mbox{\boldmath $\mathrm{x}$}^{(1)}=\{x^{n},\; 0\leq n\leq N\}$, $\mbox{\boldmath $\mathrm{x}$}^{(2)}=\{x^{n}, \; n> N\}$, $\mbox{\boldmath $\mathrm{d}$}^{(1)}=\{x^{n}, \; 0\leq n\leq N+h\}$, $\mbox{\boldmath $\mathrm{d}$}^{(2)}=\{D(x^{n}), \; n> N\}$. Then we have\emph{:}\\
\emph{i)} ${\mathcal{P}}({\reals})=span(\mbox{\boldmath $\mathrm{x}$}^{(1)})\oplus span(\mbox{\boldmath $\mathrm{x}$}^{(2)})$ and ${\mathcal{P}}({\reals})=span(\mbox{\boldmath $\mathrm{d}$}^{(1)})\oplus span(\mbox{\boldmath $\mathrm{d}$}^{(2)})$.\\
\emph{ii)} Let $D_{k}$ be the restriction of $D$ to $span(\mbox{\boldmath $\mathrm{x}$}^{(k)})$ $(k=1,2)$. Then $D=D_{1}\oplus D_{2}$.\\
\emph{iii)} $\mbox{$\mathrm{Ker}$}(D)=\mbox{$\mathrm{Ker}$}(D_{1})$.\\ 
\emph{iv)} Let $S$ be the set of inaccessible index associated with $D$, relative to $\mbox{\boldmath $\mathrm{x}$}$ and $S_{1}$ the set of inaccessible index associated with $D_{1}$ relative to $\mbox{\boldmath $\mathrm{d}$}^{(1)}$. Then ${\mathcal{R}}_{S}={\mathcal{R}}_{S_{1}}$.
\end{theorem}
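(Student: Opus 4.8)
The plan is to reduce everything to a single structural fact recorded just before the statement: since $\xi(n)=\alpha_{n\,h+n}\neq 0$ for every $n>N$, the operator satisfies $\deg D(x^{n})=n+h$ \emph{exactly} on $\mbox{\boldmath $\mathrm{x}$}^{(2)}$, while $\deg D(x^{n})\le n+h$ always. All four parts are then degree bookkeeping built on this observation, so I would establish it first and invoke it repeatedly.

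For (i), the first decomposition is immediate, since $\mbox{\boldmath $\mathrm{x}$}^{(1)}\cup\mbox{\boldmath $\mathrm{x}$}^{(2)}$ is nothing but a partition of the monomial basis $\mbox{\boldmath $\mathrm{x}$}$ of ${\mathcal{P}}({\reals})$. For the second, I would show that $\mbox{\boldmath $\mathrm{d}$}^{(1)}\cup\mbox{\boldmath $\mathrm{d}$}^{(2)}$ is a basis by an echelon argument: $\mbox{\boldmath $\mathrm{d}$}^{(1)}$ contributes exactly one monomial of each degree $0,1,\dots,N+h$, and because $\deg D(x^{N+k})=N+h+k$ for every $k\ge 1$, the family $\mbox{\boldmath $\mathrm{d}$}^{(2)}$ contributes exactly one polynomial of each degree $N+h+1,N+h+2,\dots$, with nonzero leading coefficient $\xi(N+k)$. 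The combined family therefore has a strictly triangular leading-degree structure, hence is linearly independent and spanning, so ${\mathcal{P}}({\reals})=span(\mbox{\boldmath $\mathrm{d}$}^{(1)})\oplus span(\mbox{\boldmath $\mathrm{d}$}^{(2)})$.

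For (ii) and (iii), I would verify the hypotheses of Definition \ref{def-decomp} with $X_{k}=span(\mbox{\boldmath $\mathrm{x}$}^{(k)})$, $Y_{1}=span(\mbox{\boldmath $\mathrm{d}$}^{(1)})$ and $Y_{2}=span(\mbox{\boldmath $\mathrm{d}$}^{(2)})$. The two ambient direct sums are exactly part (i). The inclusion $D_{1}(X_{1})\subset Y_{1}$ holds because $\deg D(x^{n})\le n+h\le N+h$ for $0\le n\le N$, and $D_{2}(X_{2})\subset Y_{2}$ holds by construction, since $\mbox{\boldmath $\mathrm{d}$}^{(2)}=D(\mbox{\boldmath $\mathrm{x}$}^{(2)})$. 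Thus $D=D_{1}\oplus D_{2}$, and the consequences of Definition \ref{def-decomp} recorded above give $\mbox{$\mathrm{Ker}$}(D)=\mbox{$\mathrm{Ker}$}(D_{1})\oplus\mbox{$\mathrm{Ker}$}(D_{2})$. For (iii) it then remains to note that $D_{2}$ carries the basis $\mbox{\boldmath $\mathrm{x}$}^{(2)}$ of $X_{2}$ onto the basis $\mbox{\boldmath $\mathrm{d}$}^{(2)}$ of $Y_{2}$ (independence from (i)), so $D_{2}$ is an isomorphism, $\mbox{$\mathrm{Ker}$}(D_{2})=\{0\}$, and $\mbox{$\mathrm{Ker}$}(D)=\mbox{$\mathrm{Ker}$}(D_{1})$.

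For (iv), I would use degree separation once more. Since $\mbox{$\mathrm{Im}$}(D)=\mbox{$\mathrm{Im}$}(D_{1})\oplus\mbox{$\mathrm{Im}$}(D_{2})$ and every nonzero element of $\mbox{$\mathrm{Im}$}(D_{2})=span(\mbox{\boldmath $\mathrm{d}$}^{(2)})$ has degree $>N+h$, any $g\in\mbox{$\mathrm{Im}$}(D)$ with $gsup(g)\le N+h$ must have vanishing $\mbox{$\mathrm{Im}$}(D_{2})$-component, hence lie in $\mbox{$\mathrm{Im}$}(D_{1})$. As noted before the theorem $S\subset\I(N+h+1)$, and $S_{1}\subset\{0,\dots,N+h\}$ by its definition relative to $\mbox{\boldmath $\mathrm{d}$}^{(1)}$; so for each $s\in\{0,\dots,N+h\}$ the statement ``$s=gsup(g)$ for some $g\in\mbox{$\mathrm{Im}$}(D)$'' is equivalent to ``$s=gsup(g_{1})$ for some $g_{1}\in\mbox{$\mathrm{Im}$}(D_{1})$'', which yields $S=S_{1}$. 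As both residual spaces are spanned by the monomials indexed by these identical sets, ${\mathcal{R}}_{S}={\mathcal{R}}_{S_{1}}$. The main obstacle is really the basis claim in (i): it is the only place where the hypothesis $\xi(n)\neq 0$ for $n>N$ is genuinely exploited, to pin down $\deg D(x^{n})=n+h$ and produce the triangular structure, after which (ii)--(iv) follow by elementary degree arguments.
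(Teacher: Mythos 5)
Your proof is correct, and for parts (i)--(iii) it takes essentially the same route as the paper: the decisive point in both is that $\mathbf{d}^{(1)}\cup\mathbf{d}^{(2)}$ contains exactly one polynomial of each degree (the paper phrases this as $gsup(\mathbf{d}_{n}(x))=n$ for all $n$ and invokes its full-index-basis lemma with $S=\emptyset$; your triangular leading-degree argument is the same fact), after which Definition \ref{def-decomp} and the bijection $x^{n}\mapsto D(x^{n})$ between the bases $\mathbf{x}^{(2)}$ and $\mathbf{d}^{(2)}$ yield (ii) and (iii). The one place you genuinely diverge is (iv). You prove $S=S_{1}$ directly by degree separation: every nonzero element of $\mathrm{Im}(D_{2})=\mathrm{span}(\mathbf{d}^{(2)})$ has degree $>N+h$, so any $g\in\mathrm{Im}(D)$ with $gsup(g)\leq N+h$ has vanishing $\mathrm{Im}(D_{2})$-component and lies in $\mathrm{Im}(D_{1})$, whence the accessible degrees $\leq N+h$ of $D$ and $D_{1}$ coincide. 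The paper instead establishes only the easy inclusion $S\subset S_{1}$ (from $\mathrm{Im}(D_{1})\subset\mathrm{Im}(D)$), and closes the gap abstractly: using associativity of the direct sum it writes ${\mathcal{P}}({\reals})=\mathrm{Im}(D)\oplus{\mathcal{R}}_{S_{1}}$, so ${\mathcal{R}}_{S_{1}}$ is an algebraic complement of $\mathrm{Im}(D)$ containing the complement ${\mathcal{R}}_{S}$, forcing equality. Your version is more concrete and identifies the index sets themselves; the paper's avoids element-level degree analysis at the price of the complement-comparison step. Both are sound and rest on the same structural input, namely that $\mathrm{Im}(D_{2})$ lives entirely in degrees exceeding $N+h$.
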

\begin{proof} i) As  $\mbox{\boldmath $\mathrm{x}$}^{(1)}\cap \mbox{\boldmath $\mathrm{x}$}^{(2)}=\emptyset$ and $\mbox{\boldmath $\mathrm{x}$}=\mbox{\boldmath $\mathrm{x}$}^{(1)} \cup \mbox{\boldmath $\mathrm{x}$}^{(2)}$ is the basis $\{x^{n},\;\;n\in \naturals\}$, it follows that ${\mathcal{P}}({\reals})=span(\mbox{\boldmath $\mathrm{x}$}^{(1)})\oplus span(\mbox{\boldmath $\mathrm{x}$}^{(2)})$. Clearly $\mbox{\boldmath $\mathrm{d}$}^{(1)} \cap \mbox{\boldmath $\mathrm{d}$}^{(2)}=\emptyset$.
Let us define the sequence of polynomials $\mbox{\boldmath $\mathrm{d}$}=(\mbox{\boldmath $\mathrm{d}$}_{n}(x))_{n\in \naturals}$ as follows
\[\mbox{\boldmath $\mathrm{d}$}_{n}(x)=\left\{ \begin{array}{ll} x^{n}, & \mbox{if $n\leq N+h$} \\
Dx^{n-h}, & \mbox{if $n>N+h$}, 
	\end{array} \right. 
\]
Formally, $\mbox{\boldmath $\mathrm{d}$}^{(2)}=\{Dx^{n-h}, \; n>N +h\}$, whence $\mbox{\boldmath $\mathrm{d}$}=\mbox{\boldmath $\mathrm{d}$}^{(1)} \cup \mbox{\boldmath $\mathrm{d}$}^{(2)}$. Moreover the terms of $\mbox{\boldmath $\mathrm{d}$}$ satisfy the relation $gsup(\mbox{\boldmath $\mathrm{d}$}_{n}(x))=\deg\: \mbox{\boldmath $\mathrm{d}$}_{n}(x)=n$ for all $n\in \naturals$. Applying (\ref{eq-prestand}) with $S=\emptyset$, we deduce that $\mbox{\boldmath $\mathrm{d}$}$ is a basis of ${\mathcal{P}}({\reals})$ and so ${\mathcal{P}}({\reals})=span(\mbox{\boldmath $\mathrm{d}$}^{(1)})\oplus span(\mbox{\boldmath $\mathrm{d}$}^{(2)})$.\\
ii) As $D_{k}(span(\mbox{\boldmath $\mathrm{x}$}^{(k)})) \subset span(\mbox{\boldmath $\mathrm{d}$}^{(k)})$ $(k=1,2)$, Definition \ref{def-decomp} is fulfilled, whence the assertion.\\
iii) It follows from (ii) that $\mbox{$\mathrm{Ker}$}(D)=\mbox{$\mathrm{Ker}$}(D_{1})\oplus \mbox{$\mathrm{Ker}$}(D_{2})$. As $\mbox{\boldmath $\mathrm{x}$}^{(2)}\ni x^{n}\mapsto D(x^{n})\in \mbox{\boldmath $\mathrm{d}$}^{(2)}$ is a bijection of bases the mapping $D_{2}: span(\mbox{\boldmath $\mathrm{x}$}^{(2)}) \mapsto span(\mbox{\boldmath $\mathrm{d}$}^{(2)})$ is an isomorphism and so $\mbox{$\mathrm{Ker}$}(D_{2})=\{0\}$, whence the assertion.\\ 
iv) Formally $S_{1}=\{s \in \naturals: s\in \I(N+h+1)\wedge s\not=gsup(\mbox{\boldmath $\mathrm{g}$})\; \forall \mbox{\boldmath $\mathrm{g}$}\in \mbox{$\mathrm{Im}$}(D_{1})\}$. Let $s$ be an arbitrary element of $S$, then $s\in \I(N+h+1)$ and $s\not=gsup(\mbox{\boldmath $\mathrm{g}$})\ \forall \mbox{\boldmath $\mathrm{g}$}\in \mbox{$\mathrm{Im}$}(D)$. Since $\mbox{$\mathrm{Im}$}(D_{1})\subset \mbox{$\mathrm{Im}$}(D)$ we also have $s\not=gsup(\mbox{\boldmath $\mathrm{g}$})\  \forall \mbox{\boldmath $\mathrm{g}$}\in \mbox{$\mathrm{Im}$}(D_{1})$. As $s$ satisfies both properties of $S_{1}$ it follows that $S\subset S_{1}$ and so ${\mathcal{R}}_{S}\subset {\mathcal{R}}_{S_{1}}$. Taking into account that $D_{2}$ is an isomorphism, it turns out that $\mbox{$\mathrm{Im}$}(D_{2})=span(\mbox{\boldmath $\mathrm{d}$}^{(2)})$. As the internal direct sum of vector subspaces is associative and commutative we have 
\[\begin{array}{lll}{\mathcal{P}}({\reals})&=&span(\mbox{\boldmath $\mathrm{d}$}^{(1)})\oplus span(\mbox{\boldmath $\mathrm{d}$}^{(2)})\\
&=&(\mbox{$\mathrm{Im}$}(D_{1})\oplus {\mathcal{R}}_{S_{1}})\oplus \mbox{$\mathrm{Im}$}(D_{2})\\
&=&(\mbox{$\mathrm{Im}$}(D_{1})\oplus \mbox{$\mathrm{Im}$}(D_{2}))\oplus {\mathcal{R}}_{S_{1}}\\
&=&\mbox{$\mathrm{Im}$}(D)\oplus {\mathcal{R}}_{S_{1}},
\end{array}\]
which shows that ${\mathcal{R}}_{S_{1}}$ is an algebraic complement of $\mbox{$\mathrm{Im}$}(D)$. Moreover, as 
${\mathcal{R}}_{S}$ is also an algebraic complement of $\mbox{$\mathrm{Im}$}(D)$ with ${\mathcal{R}}_{S}\subset {\mathcal{R}}_{S_{1}}$, we infer that ${\mathcal{R}}_{S}={\mathcal{R}}_{S_{1}}$ as required.
\end{proof}
Let $f :X\mapsto Y$ be a linear operator. Following the classical terminology and notation we define the rank of $f$: $rank(f)=dim\; \mbox{$\mathrm{Im}$}(f)$; the \emph{Kernel index} of $f$: $\alpha(f)=dim\; \mbox{$\mathrm{Ker}$}(f)$; the \emph{deficiency} of $f$ (or the codimension of Im($f$)): $\beta(f)=dim {\mathcal{R}}_{S}$, that is $\beta(f)=card(S)$; and the index of $f$: $index(f)=\alpha(f)-\beta(f)$. Let us recall from linear algebra that if $X, Y$ are finite dimensional spaces, then $rank(f)=dim(Y)-\beta(f)=dim(X)-\alpha(f)$.
Using the notation of Theorem \ref{theo-decomp}, let us notice that $D$ is of infinite rank but of finite index and $D_{1}$ is both of finite index and of finite rank.
\begin{theorem}\label{theo-height}The height $h$ of a differential operator $D\in \mathcal{D}$ satisfies
\[h=-index(D)\]
\end{theorem}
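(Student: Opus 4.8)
The plan is to transfer the computation of $index(D)$ to the finite-dimensional factor $D_{1}$ of the decomposition $D=D_{1}\oplus D_{2}$ furnished by Theorem \ref{theo-decomp}, and then to extract $h$ from an elementary dimension count using the rank--nullity relation recalled immediately before the statement.

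First I would observe that both quantities entering $index(D)=\alpha(D)-\beta(D)$ are inherited from $D_{1}$. By Theorem \ref{theo-decomp} (iii) we have $\mbox{$\mathrm{Ker}$}(D)=\mbox{$\mathrm{Ker}$}(D_{1})$, hence $\alpha(D)=\alpha(D_{1})$; and by Theorem \ref{theo-decomp} (iv) we have ${\mathcal{R}}_{S}={\mathcal{R}}_{S_{1}}$, hence $\beta(D)=card(S)=card(S_{1})=\beta(D_{1})$. Therefore $index(D)=\alpha(D_{1})-\beta(D_{1})=index(D_{1})$, and it is enough to prove $h=-index(D_{1})$.

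Next I would apply rank--nullity to $D_{1}$ regarded as a linear map between the finite-dimensional spaces $span(\mbox{\boldmath $\mathrm{x}$}^{(1)})$ and $span(\mbox{\boldmath $\mathrm{d}$}^{(1)})$, whose dimensions are $card(\mbox{\boldmath $\mathrm{x}$}^{(1)})=N+1$ and $card(\mbox{\boldmath $\mathrm{d}$}^{(1)})=N+h+1$. Writing $rank(D_{1})$ in the two ways permitted for operators between finite-dimensional spaces,
\[
(N+1)-\alpha(D_{1})=rank(D_{1})=(N+h+1)-\beta(D_{1}),
\]
and cancelling $N+1$ gives $\beta(D_{1})-\alpha(D_{1})=h$, that is $-index(D_{1})=h$. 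Combined with the previous reduction this yields $h=-index(D)$.

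The one point that requires care is that the codomain dimension $N+h+1$ must be nonnegative, which is not evident when $h<0$. Here I would invoke the remark following (\ref{eq-genp2}): if $h<0$, then $\alpha_{n\;k+n}=0$ for all $n<-h$, so in particular the superdiagonal values $\xi(n)=\alpha_{n\;h+n}$ vanish for every $n<-h$. Hence $\{0,1,\dots,-h-1\}\subseteq\Omega$ and $N=max\;\Omega\geq -h-1$, whence $N+h+1\geq 0$ and $\mbox{\boldmath $\mathrm{d}$}^{(1)}=\{x^{n},\,0\leq n\leq N+h\}$ is a genuine (possibly empty) set of monomials whose cardinality equals $N+h+1$. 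With this observation the dimension bookkeeping is valid in all cases, $h\geq 0$ and $h<0$ alike, and the argument is complete.
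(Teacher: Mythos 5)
Your proof is correct and takes essentially the same route as the paper's: transfer $\alpha$ and $\beta$ to $D_{1}$ via Theorem \ref{theo-decomp} (iii)--(iv) and equate the two rank--deficiency counts $rank(D_{1})=card(\mbox{\boldmath $\mathrm{x}$}^{(1)})-\alpha(D_{1})$ and $card(\mbox{\boldmath $\mathrm{d}$}^{(1)})=N+h+1=rank(D_{1})+\beta(D_{1})$. Your closing verification that $N+h+1\geq 0$ when $h<0$ is a detail the paper leaves implicit, and it is a worthwhile addition.
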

\begin{proof}  As $D_{1}$ maps $span(\mbox{\boldmath $\mathrm{x}$}^{(1)})$ into $span(\mbox{\boldmath $\mathrm{d}$}^{(1)})$, it follows that
\begin{equation} \label{eq-rank1}
rank(D_{1})=card(\mbox{\boldmath $\mathrm{x}$}^{(1)})-\alpha(D_{1})=N+1-\alpha(D_{1}). 
\end{equation}
Moreover, since $card(\mbox{\boldmath $\mathrm{d}$}^{(1)})=rank(D_{1})+\beta(D_{1})$, it follows form (\ref{eq-rank1}) that
\begin{equation} \label{eq-index1}
	  card(\mbox{\boldmath $\mathrm{d}$}^{(1)})=N-\alpha(D_{1})+\beta(D_{1})+1.   
\end{equation}
Also the definition of  $\mbox{\boldmath $\mathrm{d}$}^{(1)}$ implies
\begin{equation} \label{eq-rank2}
card(\mbox{\boldmath $\mathrm{d}$}^{(1)})=N+h+1. 
\end{equation}
We thus infer from (\ref{eq-index1}) and (\ref{eq-rank2}) that $h=\beta(D_{1})-\alpha(D_{1})$. On account of Theorem \ref{theo-decomp} (iii) and (iv), i.e. $\alpha(D_{1})=\alpha(D)$ and $\beta(D_{1})=\beta(D)$, it follows immediately that
$h=\beta(D)-\alpha(D)=-index(D)$, as claimed.
\end{proof}
\section{Row Echelon Transformations}
\label{sec:TheEchelonTransformationOfTheMatrixRepresentationOfODEs}
Let us recall some useful results from linear algebra.
\paragraph{Remarks.} Let  $\xi=(\xi_{1},...,\xi_{n})$ be a list of $n$ vectors of a vector space $X$.
Certainly the process of replacing a vector of a list by a nonzero linear combination of any vectors of the same list can be described by a composite of a finite sequence of elementary operations. Whereas a composite $\Phi_{X}$ of elementary operations, applied to lists of $X$ is an automorphism on $X^{n}$, it preserves the linear independence and the span of the original list of vectors.  Let $\Phi_{Y}$ be the composite of the same elementary operators but applied to a list of $n$ vectors from another vector space $Y$. It turns out that any composite of elementary operations commutes with linear mappings in the sense that if $f : X \mapsto Y$ is a linear mapping, then
\begin{equation}\label{eq-remark}
                f((\Phi_{X}(\xi_{1},...,\xi_{n}))_{j})=(\Phi_{Y}(f(\xi_{1}),...,f(\xi_{n})))_{j}
\end{equation} 
for all $j=1,2,...,n$, where $(...)_{j}$ stand for the $j$-th vector of the corresponding list. 
An $n \times m$ matrix $\alpha=(\alpha_{ij})$ can be regarded as a list of row vectors, say $(\vec{\alpha}_{1},...,\vec{\alpha}_{n})^T$ of the vector space ${\mathcal{F}}^{m}$. Considering the canonical ordered basis of ${\mathcal{F}}^{m}$, let us call $\sigma_{i}=gsup(\vec{\alpha}_{i})$, relative to the the standard ordering ``$<$" by magnitude of $\naturals$. If $supp(\vec{\alpha}_{i})=\emptyset$, then $\vec{\alpha}_{i}$ is the zero vector and we naturally assign $\sigma_{i}$ to zero. According to Definition \ref{def-ech}, if a matrix $\alpha=(\alpha_{ij})$ is in LREF, then $\sigma_{i} < \sigma_{i+1}$ for all $i$ such that $\sigma_{i}>0$. Also, zero rows, if any, appear on the top of a matrix in LREF. Every finite matrix can be transformed into an LREF by means of a composite of a finite sequence of elementary operations. Let $X,Y$ be vector spaces of dimensions $n,m$ respectively. Let also $\alpha=(\alpha_{ij})$ be the matrix representation of a linear mapping $f:X \mapsto Y$ relative to a pair of ordered bases, say $(\mbox{\boldmath{$\mathrm{\chi}$}},\mbox{\boldmath{$\mathrm{b}$}})$. Transforming $\alpha$ into an LREF, say $\pi=\Phi(\alpha)$, it follows (in accordance with Proposition \ref{lem-prop}) that the nonzero rows of $\pi$ determine a basis of the range of $f$ and the set of inaccessible index, $S$, associated with $f$, relative to $\mbox{\boldmath{$\mathrm{b}$}}$, is given by $S=\{1,...,m\}\backslash \{\sigma_{i}, i=1,...,n\}$, where $\sigma_{i}=gsup(\vec{\pi}_{i})$.
Moreover, applying the same $\Phi$ to the basis $\mbox{\boldmath{$\mathrm{\chi}$}}$, by virtue of (\ref{eq-remark}), a new basis $\mbox{\boldmath $\mathrm{s}$}=\Phi(\mbox{\boldmath{$\mathrm{\chi}$}})$ of $X$ is generated, that is a standard basis associated with $f$ relative to $\mbox{\boldmath{$\mathrm{b}$}}$, which includes a basis of the null space of $f$. Formally the matrix representation of $f$ relative to $(\mbox{\boldmath $\mathrm{s}$},\mbox{\boldmath{$\mathrm{b}$}})$ is $\pi$.

Following the notation of Theorem \ref{theo-decomp}, $\Pi_{1}$ will stand for the matrix representation of $D_{1}$, relative to ($\mbox{\boldmath $\mathrm{x}$}^{(1)}$, $\mbox{\boldmath $\mathrm{x}$}$), which has a finite number of rows and an infinite number of zero columns. Moreover $\Pi_{2}$ will stand for the matrix representation of $D_{2}$ relative to ($\mbox{\boldmath $\mathrm{x}$}^{(2)}$, $\mbox{\boldmath $\mathrm{x}$}$). We can write the matrix representation of $D$ relative to ($\mbox{\boldmath $\mathrm{x}$}$, $\mbox{\boldmath $\mathrm{x}$}$) as a two-blocks matrix 
\begin{equation}\label{eq-bandD}
\D_{\mbox{\boldmath $\mathrm{x}$}}^{\mbox{\boldmath $\mathrm{x}$}}=\left(\begin{array}{c} \Pi_{1} \\
\hline\\
\Pi_{2} \\
\end{array} \right)
\end{equation}  
An echelon transformation on $\Pi_{1}$ requires a finite sequence of row elementary operations for reducing $\Pi_{1}$ into a pre-LREF, say $\Pi^{*}_{1}$. Applying the same sequence of elementary operations to the domain basis $\mbox{\boldmath $\mathrm{x}$}^{(1)}=\{x^{n},\ 0\leq n\leq N\}$, we derive a standard basis $\mbox{\boldmath $\mathrm{s}$}^{(1)}=\{\mbox{\boldmath $\mathrm{s}$}_{n}(x),\: 0\leq n\leq N\}$ associated with $D_{1}$ relative to $\mbox{\boldmath $\mathrm{x}$}$. Let us call  $\mbox{\boldmath $\mathrm{s}$}=\mbox{\boldmath $\mathrm{s}$}^{(1)}\cup \mbox{\boldmath $\mathrm{x}$}^{(2)}$. Since $\mbox{\boldmath $\mathrm{s}$}$ is the union of bases of factor subspaces of ${\mathcal{P}}({\reals})$, it is a basis of ${\mathcal{P}}({\reals})$ too. Certainly the matrix representation of $D$, relative to ($\mbox{\boldmath $\mathrm{s}$}, {\mbox{\boldmath $\mathrm{x}$}}$) is given by
\begin{equation}\label{eq-lrefD}
\D_{\mbox{\boldmath $\mathrm{s}$}}^{\mbox{\boldmath $\mathrm{x}$}}=\left(\begin{array}{c} \Pi_{1}^{*}\\
\hline\\
\Pi_{2} \\
\end{array} \right)
\end{equation}  
\begin{theorem}\label{theo-ref} The matrix representation of $D\in \mathcal{D}$ relative to \emph{(}$\mbox{\boldmath $\mathrm{s}$}$, $\mbox{\boldmath $\mathrm{x}$}$\emph{)}, given by \emph{(\ref{eq-lrefD})}, is in pre-\emph{LREF}.
\end{theorem}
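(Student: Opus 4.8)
The plan is to check the matrix $\D_{\mbox{\boldmath $\mathrm{s}$}}^{\mbox{\boldmath $\mathrm{x}$}}$ of (\ref{eq-lrefD}) directly against the two defining conditions of a pre-\emph{LREF} matrix, namely properties (ii) and (iii) of Definition \ref{def-ech}. Its rows are indexed by $\naturals$ in the usual order: the rows $0\le n\le N$ constitute the block $\Pi_1^*$ and the rows $n>N$ constitute $\Pi_2$. Two facts should be recorded at the outset. First, since $\Pi_1^*$ is by construction a pre-\emph{LREF} reduction of $\Pi_1$, it already satisfies (ii) and (iii) internally. Second, by the computation preceding Definition \ref{def-decomp} we have $\xi(n)\neq 0$, hence $\deg D(x^n)=n+h$, for every $n>N$; thus the $n$-th row of $\Pi_2$ is nonzero and its $gsup$ equals $n+h$.

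For property (ii) I would first note that $\Pi_2$ contains no zero row, because $D_2$ is an isomorphism by Theorem \ref{theo-decomp} (iii), so $D(x^n)\neq 0$ for $n>N$. Hence the zero rows of $\D_{\mbox{\boldmath $\mathrm{s}$}}^{\mbox{\boldmath $\mathrm{x}$}}$ are exactly those of $\Pi_1^*$, which --- again because $\Pi_1^*$ is in pre-\emph{LREF} --- form an initial segment of $\{0,\dots,N\}$. Every predecessor of a zero row is then itself a zero row, and (ii) follows.

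For property (iii) the essential point is a separation in size between the greatest-support values of the two blocks. The rows of $\Pi_1^*$ lie in the row space of $\Pi_1$, which is $\mbox{$\mathrm{Im}$}(D_1)\subseteq span\{x^0,\dots,x^{N+h}\}$ since $\deg D(x^n)\le n+h\le N+h$ for $0\le n\le N$; as elementary row operations preserve the row space, every nonzero row $j$ of $\Pi_1^*$ satisfies $\sigma_j\le N+h$, and on the top block $\sigma$ is strictly increasing because $\Pi_1^*$ is pre-\emph{LREF}. For a bottom-block row $n>N$ we have $\sigma_n=n+h$, which is strictly increasing in $n$ and obeys $\sigma_n\ge N+1+h>N+h$. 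Every bottom value therefore strictly exceeds every top value, so $\sigma$ is strictly order-preserving across the junction as well; splicing the three monotonicity statements shows that $\sigma$ is an order monomorphism on the set $J$ of all nonzero rows, which is precisely (iii).

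I expect the only genuinely delicate step to be the junction between the two blocks: confirming that the last nonzero row of $\Pi_1^*$ has strictly smaller $gsup$ than the first row of $\Pi_2$. This collapses to the inequality $N+h<(N+1)+h$ once the uniform bound $\sigma_j\le N+h$ on $\Pi_1^*$ and the exact value $\sigma_{N+1}=N+1+h$ are available, the latter resting entirely on $\xi(n)\neq 0$ for $n>N$. Everything else is bookkeeping that transfers the pre-\emph{LREF} status of the finite block $\Pi_1^*$, together with the monotone-degree behaviour of the isomorphism block $\Pi_2$, to the whole matrix.
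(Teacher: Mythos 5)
Your proposal is correct and follows essentially the same route as the paper: property (ii) is reduced to the observation that the zero rows are exactly those of $\Pi_1^*$ (since $D_2$ is injective), and property (iii) is established by checking monotonicity of $\sigma$ separately on the two blocks and then across the junction, using $\sigma_j\le N+h$ on the top block versus $\sigma_{N+1}=N+1+h$ on the bottom. The only cosmetic differences are that the paper bounds the top block by $\sigma_j\le\sigma_N<N+h$ rather than via the row-space containment in $span(\mbox{\boldmath $\mathrm{d}$}^{(1)})$, and that it explicitly notes the degenerate case $\Omega=\emptyset$ (empty top block), which your argument handles implicitly.
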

\begin{proof} In view of Theorem \ref{theo-decomp} $\mbox{$\mathrm{Ker}$}(D)=\mbox{$\mathrm{Ker}$}(D_{1})$, the zero rows of $\D_{\mbox{\boldmath $\mathrm{s}$}}^{\mbox{\boldmath $\mathrm{x}$}}$\vspace{0.01in} are the zero rows of $\Pi_{1}^{*}$. Moreover, as $\Pi_{1}^{*}$ is in LREF, it turns out that the property (ii) in Definition \ref{def-ech} is fulfilled. It remains to show that property (iii) in Definition \ref{def-ech} holds true, namely that the mapping $J\ni j\mapsto \sigma_{j}=\deg\:D(\mbox{\boldmath $\mathrm{s}$}_{j}(x))\in \naturals$ is an order monomorphism. Let $N\in\naturals$. Let also $J_{1}$ be the indexing set of the nonzero rows of $\Pi_{1}^{*}$, i.e. $J_{1}=\{j\in J : 0\leq j \leq N\}$. As $\Pi_{1}^{*}$ is in LREF, it follows that the restriction of $\sigma$ to $J_{1}$ is an order monomorphism. Thus if $k,m\in J_{1}$ and $k<m$, then $\sigma_{k}<\sigma_{m}$. Let call $J_{1}^{\prime}=J\verb+\+J_{1}$ that is $J_{1}^{\prime}=\{j\in J : j>N\}$. The restriction of $\sigma$ to $J_{1}^{\prime}$ is given by $\sigma_{j}=j+h$ and so it is formally an order monomorphism. Thus if $k,m\in J_{1}^{\prime}$ and $k<m$, then $\sigma_{k}<\sigma_{m}$. Let us finally consider the case when $k\in J_{1}$ and $m\in J_{1}^{\prime}$. As $N$ is the last element of $J_{1}$ and $N+1$ is the first element of $J_{1}^{\prime}$ it follows that $k\leq N < N+1 \leq m$ and $\sigma_{k}\leq \sigma_{N}< N+h < N+h+1=\sigma_{N+1} \leq \sigma_{m}$, whence the assertion. If $N \not\in \naturals$, then $\sigma_{j}=j+h$ for all $j\in J$ and the proof is complete.\end{proof}

As a consequence of Theorem \ref{theo-ref} the basis $\mbox{\boldmath $\mathrm{s}$}=\mbox{\boldmath $\mathrm{s}$}^{(1)}\cup \mbox{\boldmath $\mathrm{x}$}^{(2)}$ of ${\mathcal{P}}({\reals})$ is a standard basis associated with $D$ relative to $\mbox{\boldmath $\mathrm{x}$}$. It turns out that a basis of the polynomial kernel of $D$ is an initial part of $(\mbox{\boldmath $\mathrm{s}$}_{n}(x))_{0\leq n\leq N}$, say $\mbox{\boldmath $\mathrm{u}$}=(\mbox{\boldmath $\mathrm{s}$}_{n}(x))_{n\in W}$, such that $D_{1}(\mbox{\boldmath $\mathrm{s}$}_{n}(x))=0$ and the set of inaccessible index associated with $D$, relative to $\mbox{\boldmath $\mathrm{x}$}$, is given by 
\begin{equation}\label{eq-S1}
S=\I(N+h+1)\verb+\+\{\deg\: D_{1}(\mbox{\boldmath $\mathrm{s}$}_{n}(x)):\;0\leq n\leq N\}. 
\end{equation}
Formally $\mbox{\boldmath $\mathrm{s}$}\verb+\+\mbox{\boldmath $\mathrm{u}$}$ determines a standard sequence associated with $D$ relative to $\mbox{\boldmath $\mathrm{x}$}$. 

We thus conclude that in all cases of ODEs determined by differential operators in the class $\mathcal{D}$, a standard basis $\mbox{\boldmath $\mathrm{s}$}$ associated with $D\in \mathcal{D}$, relative to $\mbox{\boldmath $\mathrm{x}$}$, is automatically constructed and so a complete sequence of CPs, say $(\mbox{\boldmath $\mathrm{q}$}_{m}(x))_{m\in S^{\prime}}$, is directly generated by (\ref{eq-cre1}). Moreover a basis of CPs is given by $\tilde{\mbox{\boldmath $\mathrm{q}$}}=(\mbox{\boldmath $\mathrm{s}$}_{n}(x))_{n\in W}\cup (\mbox{\boldmath $\mathrm{q}$}_{m}(x))_{m\in S^{\prime}}$.  The elements of  $\mbox{\boldmath $\mathrm{u}$}=(\mbox{\boldmath $\mathrm{s}$}_{n}(x))_{n\in W}$ will be referred to as null CPs. The classification of the remaining elements of a basis of CPs, those indexed by $S^{\prime}$, will be discussed in the following section.
\section{Complete Sequences of Canonical Polynomials and Bunchaft's Classification}
\label{sec:CompleteSequencesOfCanonicalPolynomialsAndBunchaft'sClassification}
In this paragraph we discuss Bunchaft's classification of CPs in connection with the concept of ``complete sequence of CPs", introduced earlier in this paper. For these purposes, in this section, $J$ will stand for the indexing set of the nonzero rows of the matrix representation of $D\in \mathcal{D}$, relative to ($\mbox{\boldmath $\mathrm{x}$}$, $\mbox{\boldmath $\mathrm{x}$}$) and $\sigma$ for the corresponding mapping $\sigma: J \ni n\mapsto \sigma_{n}=\deg\: D(x^{n})\in S^{\prime}$.
\begin{theorem}\label{theo-icp} In a complete sequence of \emph{CPs} any term, say $\mbox{\boldmath $\mathrm{q}$}_{m}(x)$, satisfies 
$m=\deg\:D(\mbox{\boldmath $\mathrm{q}$}_{m}(x))$ 
for all $m\in S^{\prime}$.
\end{theorem}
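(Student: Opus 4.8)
The plan is to read off the degree of $D(\mbox{\boldmath $\mathrm{q}$}_{m}(x))$ directly from the defining relation (\ref{eq-can3}) of the canonical polynomials, using nothing beyond the definition of the inaccessible index set $S$. First I would recall from (\ref{eq-can3}) that every term of a complete sequence of CPs satisfies $D(\mbox{\boldmath $\mathrm{q}$}_{m}(x))=x^{m}+\mbox{\boldmath $\mathrm{r}$}_{m}(x)$ with $\mbox{\boldmath $\mathrm{r}$}_{m}(x)\in \mathcal{R_{S}}$, so that $\mbox{\boldmath $\mathrm{r}$}_{m}(x)=\sum_{s\in S}r_{ms}x^{s}$ is supported entirely on powers indexed by $S$. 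Consequently the support of $D(\mbox{\boldmath $\mathrm{q}$}_{m}(x))$, relative to $\mbox{\boldmath $\mathrm{x}$}$, is contained in $\{m\}\cup S$; and since the coefficient of $x^{m}$ equals $1$, the index $m$ genuinely belongs to this support, which in particular makes $D(\mbox{\boldmath $\mathrm{q}$}_{m}(x))$ a nonzero element of $\mbox{$\mathrm{Im}$}(D)$ (indeed it is the Noether vector $\mbox{\boldmath $\mathrm{n}$}_{m}\neq 0$ of (\ref{eq-Noe})).

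Next I would invoke the defining property of the inaccessible index set recorded in Subsection~\ref{sec:TwoSpecialBasesOfSubspacesOfAVectorSpace}: an index $s$ lies in $S$ precisely when $s\neq gsup(\mbox{\boldmath $\mathrm{g}$})$ for every $\mbox{\boldmath $\mathrm{g}$}\in \mbox{$\mathrm{Im}$}(D)$. Applied to the nonzero vector $\mbox{\boldmath $\mathrm{g}$}=D(\mbox{\boldmath $\mathrm{q}$}_{m}(x))\in \mbox{$\mathrm{Im}$}(D)$, this forces its greatest support index $gsup(D(\mbox{\boldmath $\mathrm{q}$}_{m}(x)))=\deg\:D(\mbox{\boldmath $\mathrm{q}$}_{m}(x))$ to lie in $S^{\prime}$. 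Since $S$ and $S^{\prime}$ are disjoint and the support of $D(\mbox{\boldmath $\mathrm{q}$}_{m}(x))$ meets $S^{\prime}$ only in the single index $m$ (all other candidate indices belong to $S$), the maximal index of the support cannot be any $s\in S$ and is therefore forced to equal $m$. This yields $\deg\:D(\mbox{\boldmath $\mathrm{q}$}_{m}(x))=m$ for every $m\in S^{\prime}$, as asserted.

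The argument is short, and the one point that requires care is the elimination of the residual contribution: a priori $\mbox{\boldmath $\mathrm{r}$}_{m}(x)$ could contain a power $x^{s}$ with $s>m$, which would raise the degree above $m$. Ruling this out is the main (and essentially the sole) obstacle, and its resolution is exactly the defining property of $S$ — no index in $S$ can serve as the $gsup$ of an element of $\mbox{$\mathrm{Im}$}(D)$, so the top power of $D(\mbox{\boldmath $\mathrm{q}$}_{m}(x))$ cannot be an inaccessible residual power and must instead coincide with the accessible index $m$. I would also note that completeness of the sequence is not actually needed for the degree identity of an individual term; it merely guarantees that such a term exists for every $m\in S^{\prime}$, so the identity holds across the whole complete sequence.
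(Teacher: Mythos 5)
Your proposal is correct and follows essentially the same route as the paper: both arguments rest on the single observation that $D(\mbox{\boldmath $\mathrm{q}$}_{m}(x))=x^{m}+\mbox{\boldmath $\mathrm{r}$}_{m}(x)$ is a nonzero element of $\mbox{$\mathrm{Im}$}(D)$ whose support lies in $\{m\}\cup S$, so by the defining property of the inaccessible index set its $gsup$ cannot lie in $S$ and must therefore be $m$. The paper merely phrases this as a proof by contradiction (assuming some $s_{i}\geq m$ and deriving an element of $\mbox{$\mathrm{Im}$}(D)$ with $gsup$ in $S$), whereas you argue directly; the content is identical.
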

\begin{proof} In view of (\ref{eq-can3}), we must show that $\deg\: \mbox{\boldmath $\mathrm{r}$}_{m}(x)<m$ for all $m\in S^{\prime}$. Let us rewrite the residual polynomial as $\mbox{\boldmath $\mathrm{r}$}_{m}(x)=r_{s_{1}}x^{s_{1}}+r_{s_{2}}x^{s_{2}}+...+r_{s_{k}}x^{s_{k}}$ with $s_{i}\in S$ and $s_{1}<s_{2}<...<s_{k}$. It suffices to show that $m>s_{i}$ for all $i$. On the contrary we assume that $m\leq s_{i}$ for some $i$. Now  $\deg\: D(\mbox{\boldmath $\mathrm{q}$}_{m}(x))=s_{k}\in S$. It turns out that there exists a polynomial in the range of $D$ with greatest element of its support in $S$. The latter contradicts the definition of $S$, whence the assertion follows.
\end{proof}

\begin{definition} \label{def-class} A canonical polynomial $\mbox{\boldmath $\mathrm{q}$}_{m}(x)$ is called primary if there is some $n\in \naturals$ such that $m=\sigma_{n}$. Otherwise it is called derived-singular. A primary \emph{CP} is called primary-generic if further $\sigma_{n}=n+h$. Otherwise it is called primary-singular. 
\end{definition}

The three categories of CPs, indexed by $S^{\prime}$, are displayed in the following diagram:
\[\begin{array}{rrl}
m\in S^{\prime} & &  \\
\swarrow \ \  \ \ \searrow & & \\
\begin{array}{ccc} & primary:\ m=\sigma_{n} & \\
& \mbox{$\mathrm{for\; some}$}\ n\in \naturals&  \\
&\swarrow \ \  \searrow&   \\
\begin{array}{c} primary-generic: \\
\sigma_{n}=n+h \end{array} & & \begin{array}{c} primary-singular: \\
\sigma_{n}\not=n+h \end{array}
\end{array}  & & \hspace{-0.15in} \begin{array}{c} derived-singular:\\ m\not=\sigma_{n}\; \mbox{$\mathrm{for\; all}$}\ n\in \naturals\end{array}
\end{array}
\]
It is clear from the above definition and Theorem \ref{theo-icp} that if a derived-singular CP of index $m\in S^{\prime}$, say $\mbox{\boldmath $\mathrm{q}$}_{m}(x)$, exists, then $m\not=\deg\: D(x^{n})$ for all $n\in \naturals$. It amounts to saying precisely that none of the images of the terms of  $\mbox{\boldmath $\mathrm{x}$}$ under $D$ yields the degree $m$; although as $m\in S^{\prime}$ there exist polynomials in $\mbox{$\mathrm{Im}$}(D)$ yielding the degree $m$. In contrast, using in place of $\mbox{\boldmath $\mathrm{x}$}$ any standard basis, its images under $D$ generate all of the degrees $m\in S^{\prime}$ and so the modified recursive formula (\ref{eq-cre1}) results in a complete sequence of CPs. Moreover as $\sigma_{n}=n+h$ for all $n>N$, it follows directly from Definition \ref{def-class} that for any index $n>N$ the  $\mbox{\boldmath $\mathrm{q}$}_{\sigma_{n}}(x)$ is a primary-generic CP. Thus primary-singular and derived-singular CPs form a finite subsequence of a complete sequence of CPs whose indices are all bounded by $N+h$. We directly infer the following criterion.

\begin{theorem}\label{theo-prim}  There exists a complete sequence of \emph{CPs} consisting of primary \emph{(}generic or singular\emph{) CPs} if and only if $\sigma: J\mapsto S^{\prime}$ is surjective. Equivalently, every complete sequence of \emph{CPs} contains at least one derived-singular \emph{CP}, if and only if $\ \mbox{$\mathrm{Im}$}(\sigma)$ is a proper subset of $S^{\prime}$.
\end{theorem}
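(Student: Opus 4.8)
The plan is to reduce the whole statement to the observation that, by Definition \ref{def-class}, the dichotomy ``primary versus derived-singular'' attached to a canonical polynomial $\mbox{\boldmath $\mathrm{q}$}_{m}(x)$ depends only on its index $m\in S^{\prime}$: it is primary precisely when $m=\sigma_{n}$ for some $n$, that is $m\in\mbox{$\mathrm{Im}$}(\sigma)$, and derived-singular otherwise. By Theorem \ref{theo-icp} every complete sequence of CPs is indexed exactly by $S^{\prime}$, one term per index, so this index-based label applies uniformly; by Corollary \ref{cor-uniq} any two CPs sharing an index differ only by a kernel element, so the label does not depend on which representative a given complete sequence happens to select. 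The set of derived-singular indices is therefore the fixed set $S^{\prime}\verb+\+\mbox{$\mathrm{Im}$}(\sigma)$, and every complete sequence carries its derived-singular terms precisely at those indices.

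First I would record the always-valid inclusion $\mbox{$\mathrm{Im}$}(\sigma)\subseteq S^{\prime}$: since $\sigma_{n}=\deg\: D(x^{n})=gsup(D(x^{n}))$ relative to $\mbox{\boldmath $\mathrm{x}$}$ and $D(x^{n})\in\mbox{$\mathrm{Im}$}(D)$, each $\sigma_{n}$ is the $gsup$ of a range element and hence lies in $S^{\prime}$ by the definition of the inaccessible index set. Consequently $\sigma\colon J\to S^{\prime}$ is surjective if and only if $S^{\prime}\subseteq\mbox{$\mathrm{Im}$}(\sigma)$, i.e.\ if and only if $\mbox{$\mathrm{Im}$}(\sigma)=S^{\prime}$.

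Next I would establish the first biconditional. A complete sequence $(\mbox{\boldmath $\mathrm{q}$}_{m}(x))_{m\in S^{\prime}}$ consists solely of primary CPs exactly when every index $m\in S^{\prime}$ lies in $\mbox{$\mathrm{Im}$}(\sigma)$, that is $S^{\prime}\subseteq\mbox{$\mathrm{Im}$}(\sigma)$, which by the previous paragraph is equivalent to surjectivity of $\sigma$; and because the classification is by index this holds for one complete sequence iff it holds for all. For the ``Equivalently'' restatement I would negate: $\sigma$ fails to be surjective iff $\mbox{$\mathrm{Im}$}(\sigma)\subsetneq S^{\prime}$ iff there is some $m\in S^{\prime}$ with $m\notin\mbox{$\mathrm{Im}$}(\sigma)$, whose canonical polynomial is forced to be derived-singular; this term then occurs, with the same index, in every complete sequence. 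Conversely, if every complete sequence contains a derived-singular CP then (complete sequences existing) some index $m\in S^{\prime}$ lies outside $\mbox{$\mathrm{Im}$}(\sigma)$, so $\mbox{$\mathrm{Im}$}(\sigma)\subsetneq S^{\prime}$.

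The only step demanding genuine care is the opening observation that the primary/derived-singular label is intrinsic to the index; this is what lets the existential and universal phrasings coincide and what makes ``every complete sequence contains a derived-singular CP'' the exact negation of the first assertion. Once that is in place the rest is pure bookkeeping with the inclusions $\mbox{$\mathrm{Im}$}(\sigma)\subseteq S^{\prime}$ and their negations, and I anticipate no computational obstacle, the subtlety lying solely in invoking Theorem \ref{theo-icp} and Corollary \ref{cor-uniq} to certify independence of the chosen representatives.
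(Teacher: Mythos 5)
Your proposal is correct and follows essentially the same route as the paper, which gives no formal proof but ``directly infers'' the criterion from Definition \ref{def-class} and Theorem \ref{theo-icp} via exactly the observation you lead with: the primary versus derived-singular label depends only on whether the index $m\in S^{\prime}$ lies in $\mbox{$\mathrm{Im}$}(\sigma)$, so the existential and universal phrasings are genuine negations of one another. Your explicit appeal to Corollary \ref{cor-uniq} and the inclusion $\mbox{$\mathrm{Im}$}(\sigma)\subseteq S^{\prime}$ merely makes precise what the paper leaves implicit.
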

The following statements enables us to rewrite the above criterion in terms of standard sequences.
\begin{proposition}\label{lem-prim} The usual polynomial basis $\mbox{\boldmath $\mathrm{x}$}$ contains a standard sequence of polynomials associated with $D$, relative to $\mbox{\boldmath $\mathrm{x}$}$, if and only if $\sigma: J\mapsto S^{\prime}$ is surjective. 
\end{proposition}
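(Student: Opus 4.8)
The plan is to prove both implications directly, after reducing the assertion to the observation that a sub-family of the monomial basis $\mbox{\boldmath $\mathrm{x}$}$ is a standard sequence exactly when the degrees of its images under $D$ sweep out all of $S^{\prime}$. Throughout I would work with $G=\mbox{$\mathrm{Im}$}(D)$, so that the sets ${\mathcal{M}}_{i}=\{\mbox{\boldmath $\mathrm{g}$}\in \mbox{$\mathrm{Im}$}(D):\;gsup(\mbox{\boldmath $\mathrm{g}$})=i\}$ and the full-index characterization of bases of $\mbox{$\mathrm{Im}$}(D)$ (\cite[Lemma 1]{Pa:reap}) are available, together with the canonical isomorphism $\tilde{D}:{\mathcal{P}}({\reals})/\mbox{$\mathrm{Ker}$}(D)\to\mbox{$\mathrm{Im}$}(D)$, $\tilde{D}([p])=D(p)$.

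For the forward implication I would assume that $\mbox{\boldmath $\mathrm{x}$}$ contains a standard sequence $(\mbox{\boldmath $\mathrm{s}$}_{i})_{i\in S^{\prime}}$ associated with $D$ relative to $\mbox{\boldmath $\mathrm{x}$}$. Being contained in $\mbox{\boldmath $\mathrm{x}$}$, its terms are monomials, say $\mbox{\boldmath $\mathrm{s}$}_{i}=x^{n_{i}}$, and the defining relation (\ref{eq-standfa1}) gives $\deg\: D(x^{n_{i}})=i$ for every $i\in S^{\prime}$. Since $i\in S^{\prime}\subseteq\naturals$ is a genuine degree, $D(x^{n_{i}})\neq 0$, whence $n_{i}\in J$ and $\sigma_{n_{i}}=\deg\: D(x^{n_{i}})=i$. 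As $i$ runs over all of $S^{\prime}$, this exhibits $\sigma: J\to S^{\prime}$ as surjective.

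For the converse I would assume $\sigma: J\to S^{\prime}$ surjective and, for each $i\in S^{\prime}$, select $n_{i}\in J$ with $\sigma_{n_{i}}=i$ (taking $n_{i}=\min\sigma^{-1}(i)$ avoids any appeal to choice). Because $\sigma$ is single-valued the indices $n_{i}$ are automatically pairwise distinct, so the $x^{n_{i}}$ are distinct members of $\mbox{\boldmath $\mathrm{x}$}$, hence linearly independent. By construction $gsup(D(x^{n_{i}}))=\deg\: D(x^{n_{i}})=i$, so $D(x^{n_{i}})\in {\mathcal{M}}_{i}$ for every $i\in S^{\prime}$; thus $(D(x^{n_{i}}))_{i\in S^{\prime}}$ is a selection of one element from each ${\mathcal{M}}_{i}$, i.e. a full-index basis of $\mbox{$\mathrm{Im}$}(D)$. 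Applying $\tilde{D}^{-1}$, I would transport this to the statement that $([x^{n_{i}}])_{i\in S^{\prime}}$ is a basis of ${\mathcal{P}}({\reals})/\mbox{$\mathrm{Ker}$}(D)$, which unravels to: $(x^{n_{i}})_{i\in S^{\prime}}$ is linearly independent and spans an algebraic complement of $\mbox{$\mathrm{Ker}$}(D)$. Combined with $\deg\: D(x^{n_{i}})=i$, this is precisely the data of a standard sequence, and it sits inside $\mbox{\boldmath $\mathrm{x}$}$, as required.

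The only genuinely non-routine point is this last upgrade: the bare degree condition $\deg\: D(x^{n_{i}})=i$ for all $i\in S^{\prime}$ must be shown to force the monomials themselves to be a complement of $\mbox{$\mathrm{Ker}$}(D)$, not merely to have images of the correct degree. I expect the full-index characterization of $\mbox{$\mathrm{Im}$}(D)$ together with the isomorphism $\tilde{D}$ to carry this step formally, since once the images are recognized as a basis of the range, independence and complementarity on the domain side follow from transport through $\tilde{D}^{-1}$. The surjectivity bookkeeping and the automatic distinctness of the $n_{i}$ are straightforward by comparison.
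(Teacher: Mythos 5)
Your proposal is correct and follows essentially the same route as the paper: the converse is the paper's argument verbatim (select one monomial from each fibre $\sigma^{-1}(i)$ and verify $\deg D(x^{n_i})=i$), while your forward direction reads surjectivity directly off the defining relation (\ref{eq-standfa1}) instead of the paper's slight detour through extending to a standard basis in \emph{pre}-LREF and invoking Proposition \ref{lem-prop}(i). The ``non-routine upgrade'' you flag at the end --- that the degree condition alone certifies a standard sequence --- is exactly what the paper compresses into its citation of (\ref{eq-standfa1}), and your explicit justification via the full-index basis of $\mbox{$\mathrm{Im}$}(D)$ and transport through $\tilde{D}^{-1}$ is the correct unpacking of that step.
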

\begin{proof} Let $\mbox{\boldmath $\mathrm{x}$}^{*}$ be a standard sequence such that $\mbox{\boldmath $\mathrm{x}$}^{*} \subset \mbox{\boldmath $\mathrm{x}$}$. Then $\mbox{\boldmath $\mathrm{x}$}^{*}$ can be extended to a standard basis, say $\mbox{\boldmath $\mathrm{s}$}$, associated with $D$, relative to $\mbox{\boldmath $\mathrm{x}$}$, by means of elements of a basis of $\mbox{$\mathrm{Ker}$}(D)$. As the matrix representation of $D$, relative to $(\mbox{\boldmath $\mathrm{s}$}, \mbox{\boldmath $\mathrm{x}$})$ is in \emph{pre}-LREF the result follows from Proposition \ref{lem-prop}.
Conversely, as $\sigma: J\mapsto S^{\prime}$ is surjective, it follows that for every $m\in S^{\prime}$ there exists some $n\in \naturals$ such that $m=\sigma_{n}$ (or $m=\deg\: D(x^{n})$). For each $m\in S^{\prime}$ we define the set $Z_{m}=\{n\in J: \sigma_{n}=m\}$ and the set ${\mathcal{X}}_{m}=\{x^{n}: n\in Z_{m}\}$. Let us choose an element, say $\mbox{\boldmath $\mathrm{s}$}_{m}(x)$, from each set ${\mathcal{X}}_{m}$. As the terms of the sequence $(\mbox{\boldmath $\mathrm{s}$}_{m}(x))_{m\in S^{\prime}}$ satisfy $\deg\:D(\mbox{\boldmath $\mathrm{s}$}_{m}(x))=m$ for all $m\in S^{\prime}$, it follows from (\ref{eq-standfa1}) that $(\mbox{\boldmath $\mathrm{s}$}_{m}(x))_{m\in S^{\prime}}$ is a standard sequence associated with $D$, relative to $\mbox{\boldmath $\mathrm{x}$}$, whose terms are elements of $\mbox{\boldmath $\mathrm{x}$}$, as claimed. 
\end{proof}

Combining Theorem \ref{theo-prim} with Proposition \ref{lem-prim} we deduce the following result.
\begin{corollary}\label{cor-prim} There exists a complete sequence of \emph{CPs} consisting of primary \emph{(}generic or singular\emph{) CPs} if and only if $\mbox{\boldmath $\mathrm{x}$}$ contains a standard sequence of polynomials associated with $D$, relative to $\mbox{\boldmath $\mathrm{x}$}$.
\end{corollary}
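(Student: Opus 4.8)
The plan is to observe that both of the preceding results phrase their characterizing condition in terms of one and the same pivot property --- the surjectivity of the mapping $\sigma: J\mapsto S^{\prime}$, $\sigma_{n}=\deg\: D(x^{n})$, where $J$ is the indexing set of the nonzero rows of the matrix representation of $D$ relative to $(\mbox{\boldmath $\mathrm{x}$},\mbox{\boldmath $\mathrm{x}$})$ --- and then to chain the two biconditionals through this common link. No genuinely new argument is required; the corollary is a purely logical combination of results already in hand.

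First I would verify that the mapping $\sigma$ appearing in Theorem \ref{theo-prim} and the mapping $\sigma$ appearing in Proposition \ref{lem-prim} are literally the same: both invoke the conventions fixed at the start of this section, namely the same domain $J$, the same codomain $S^{\prime}$, and the same assignment $n\mapsto \deg\: D(x^{n})$. This identification is the only substantive checkpoint, and it is immediate from the section's standing notation rather than from any computation.

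With that identification secured, I would simply concatenate the two equivalences. By Theorem \ref{theo-prim}, a complete sequence of CPs consisting of primary (generic or singular) CPs exists if and only if $\sigma: J\mapsto S^{\prime}$ is surjective. By Proposition \ref{lem-prim}, $\mbox{\boldmath $\mathrm{x}$}$ contains a standard sequence of polynomials associated with $D$ relative to $\mbox{\boldmath $\mathrm{x}$}$ if and only if that same $\sigma$ is surjective. By transitivity of ``if and only if'', the existence of a primary complete sequence of CPs is equivalent to the containment of a standard sequence in $\mbox{\boldmath $\mathrm{x}$}$, which is precisely the asserted statement.

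The main obstacle is essentially absent: all the substantive content lives upstream, in Theorem \ref{theo-prim} and Proposition \ref{lem-prim} (which themselves rest on Proposition \ref{lem-prop} and on the degree analysis underlying Definition \ref{def-class}). The single point demanding any care is confirming that the two surjectivity hypotheses refer to the identical map and not to two coincidentally-named objects; once this is noted, the corollary follows as a one-line logical deduction.
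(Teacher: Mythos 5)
Your proposal is correct and matches the paper exactly: the corollary is stated immediately after the sentence ``Combining Theorem \ref{theo-prim} with Proposition \ref{lem-prim} we deduce the following result,'' i.e.\ the paper also obtains it by chaining the two biconditionals through the common condition that $\sigma: J\mapsto S^{\prime}$ is surjective. Your added check that the two occurrences of $\sigma$ denote the same map is sound, since the section fixes $J$ and $\sigma$ once for all of its statements.
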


\section{The Full Recursive Implementation of the Tau Method}
\label{sec:TheFullRecursiveConstructionOfTheTauMethod}
Let $D\in \mathcal{D}$. Let also
\begin{equation}\label{eq-diff.eq}
	    Dy(x)=f(x)
\end{equation}
be a differential equation subjected to $\nu$ supplementary conditions (initial, boundary, mixed) and $f(x)$ be a polynomial. The new strategy introduced by the tau-method was essentially to compute the exact polynomial solution to a modified equation derived by adding a perturbation term to the right hand side of the original equation.
In particular, the perturbed polynomial equation is of the form
\begin{equation}\label{eq-pert}
          Dy_{n}(x)=f(x)+H_{n}(x),
\end{equation}
where the perturbation term $H_{n}(x)=\sum_{i=1}^{M}\tau^{(n)}_{i}\rho_{n-i+1}(x)$ is a linear combination of Chebyshev or Legendre polynomials of degree $n-i+1$ with $n\in \naturals$ and $1 \leq i \leq M$, weighted by the $\tau$-parameters. By (\ref{eq-diff.eq}) and  (\ref{eq-pert}) the linearity of $D$ implies that $D(y(x)-y_{n}(x))=H_{n}(x)$. Accordingly the spectral character of the tau method arises from the fact that the perturbation term $H_{n}(x)$ ``minimizes" the quantity $D(y(x)-y_{n}(x))$.
The existence and uniqueness of the polynomial solution of (\ref{eq-pert}) impose the following two types of conditions:
\begin{itemize}
	\item The approximate solution $y_{n}(x)$ must be a polynomial in the range of $D$, that is $f(x)+H_{n}(x)\in \mbox{$\mathrm{Im}$}(D)$.
	According to (\ref{eq-parm}) this type of conditions provides $card(S)$ (or $\beta(D)$) number of equations, which are usually refered to as S-Tau Method Matching conditions (STMC).\vspace{-0.1in}
	\item The approximate solution $y_{n}(x)$ must satisfy exactly the $\nu$ supplementary conditions, that provides $\nu$ number of equations. 
\end{itemize}
Next, using the above results the following 
result is recovered.
\begin{proposition}\label{theo-tauprm}
For any order $n$ of approximation such that $n>N$ the number $M$ of tau parameters to satisfying the required conditions must be:  $M=\nu+h$. 
\end{proposition}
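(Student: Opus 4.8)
The plan is to obtain $M=\nu+h$ from a degree-of-freedom count, balancing the free unknowns of the perturbed problem (\ref{eq-pert}) against the scalar conditions imposed on it by existence and uniqueness, and then eliminating $h$ through Theorem \ref{theo-height}.

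First I would count the unknowns. Determining a polynomial solution of (\ref{eq-pert}) that also meets the supplementary conditions amounts to fixing the $M$ tau parameters entering $H_n(x)$ together with the scalars left free in the solution representation (\ref{eq-repr}); by (\ref{eq-repr}) the latter are precisely the $card(W)=\alpha(D)$ coefficients $C_w$ attached to a basis of $\mbox{$\mathrm{Ker}$}(D)$. Hence there are $M+\alpha(D)$ unknowns. On the side of the equations, the membership $f(x)+H_n(x)\in\mbox{$\mathrm{Im}$}(D)$ is, through (\ref{eq-parm}), equivalent to $card(S)=\beta(D)$ STMC relations, while the supplementary conditions add $\nu$ more. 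Requiring the perturbed problem to be exactly determined then forces $M+\alpha(D)=\beta(D)+\nu$, that is $M=\beta(D)-\alpha(D)+\nu$. Since $index(D)=\alpha(D)-\beta(D)$ and Theorem \ref{theo-height} gives $h=-index(D)=\beta(D)-\alpha(D)$, I conclude $M=\nu+h$.

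The point that needs care, and the main obstacle, is to justify that equality of the two counts is the correct criterion, i.e. that the $\beta(D)$ matching relations and the $\nu$ supplementary conditions form a consistent and independent system in the $M+\alpha(D)$ unknowns; this is exactly where $n>N$ enters. For $n>N$ one has $\sigma_n=n+h$, so the canonical polynomials occurring in an order-$n$ approximation are all primary-generic and the degree pattern of (\ref{eq-can3}) is uniform in $n$. Moreover $S\subseteq\I(N+h+1)$ with $\max S\le N+h<n+h$, so each of the $\beta(D)$ relations (\ref{eq-parm}) is genuinely active at this order and their number is the stated constant, independent of $n$. Verifying that the combined system is then square and nonsingular is the substantive step; granting it, $M=\nu+h$ is precisely the number of tau parameters matching the excess of imposed conditions over the free kernel constants.

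As an independent cross-check in the generic case $h\ge 0$, one may count directly in the basis $\mbox{\boldmath $\mathrm{x}$}$: writing $y_n$ with $\deg y_n=n$ supplies $n+1$ coefficients, the identity $Dy_n=f+H_n$ must hold up to degree $n+h$ and so yields $n+h+1$ relations (the top $h$ of which force the leading coefficients of $Dy_n$ to vanish), and the $\nu$ supplementary conditions are appended; balancing $n+1+M=(n+h+1)+\nu$ returns $M=\nu+h$.
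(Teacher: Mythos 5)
Your argument is exactly the paper's: equate the number of free parameters $M+\alpha(D)$ (tau parameters plus the kernel coefficients $C_w$) with the number of conditions $\nu+\beta(D)$ (supplementary conditions plus the STMC from (\ref{eq-parm})), and then substitute $h=\beta(D)-\alpha(D)$ from Theorem \ref{theo-height}. The extra remarks on consistency for $n>N$ and the direct degree count in $\mbox{\boldmath $\mathrm{x}$}$ go slightly beyond what the paper writes down, but the proof itself coincides with the paper's.
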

\begin{proof} Taking into account that $\alpha(D)=card(W)$, the total number of parameters ($C_{w}$ and $\tau^{(n)}_{i}$), that is $\alpha(D)+M$, must be equal to the number of equations induced by the required conditions, that is $\nu+\beta(D)$, whence $M=\nu-\alpha(D)+\beta(D)$ for $n>N$. By virtue of Theorem \ref{theo-height}, we infer that $M=\nu+h$ for $n>N$.\end{proof}

Taking $n>N$ in (\ref{eq-pert}) an inspection of the proof of proposition (\ref{theo-tauprm}) shows that the tau approximate solution is determined by solving an $(\nu+\beta(D))\times(\nu+\beta(D))$ algebraic system of linear equations. 

On account of Theorem \ref{theo-prim}, a complete sequence of CPs may contain some derived-singular CPs, since $\sigma: J \ni n\mapsto \sigma_{n}=\deg\: D(x^{n})\in S^{\prime}$ is not necessarily surjective. By Corollary \ref{cor-prim}, the latter amounts to saying precisely that $\mbox{\boldmath $\mathrm{x}$}$ does not contain a standard sequence associated with $D$ relative to $\mbox{\boldmath $\mathrm{x}$}$. In this case (see Example 2) the recursive formulae either in the original or in the extended Bunchaft's form are not efficient for the generation of a basis of CPs. Following the approach discussed in this paper, we avoid this difficulty by starting the  program with an echelon transformation of the matrix representation of $D_{1}$. Such a transformation, on account of Theorem \ref{theo-ref}, also provides an LREF of the matrix representation of the entire $D$, coupled with a standard basis associated with $D$, relative to $\mbox{\boldmath $\mathrm{x}$}$. The modified recursive formula (\ref{eq-cre1}) is then activated so as to generate a basis of CPs. 

Even in the rather frequent case, in which a complete sequence of CPs consists of primary CPs exclusively, an echelon  transformation of the matrix representation of $D_{1}$ is essential for the automatic construction of the set of inaccessible index along with the null CPs and so for the recursive construction of a basis of CPs. It turns out that the above-mentioned change of the programming strategy is necessary for the full recursive implementation of the tau method via bases of CPs. 

Programming Languages which support symbolic computations, such as Mathematica,  make it possible to calculate within a unified program the algebraic entities, introduced earlier in this paper, and eventually to curry out the self-operative recursive implementation of the tau method for differential equations determined by operators in $\mathcal{D}$.
\section{Examples}
The following examples indicate the basic steps, which directly lead to a computer's program executable by Mathematica.
We start with the last example treated by Bunchaft in~\cite{Bu:ext}.
\paragraph{Example 1}
\label{sec:Example1} 
Let the differential operator be
\[
	D=\frac{d^{4}}{dx^{4}}+x\frac{d^{3}}{dx^{3}}-3\frac{d^{2}}{dx^{2}}.
\]
Thus $h=max\{-4, 1-3, 0-2\}=-2$ and $index(D)=2$. A generating system of $\mbox{$\mathrm{Im}$}(D)$ is given by the polynomials\vspace{-0.1in}
\begin{equation}\label{eq-gsyst1}
			D(x^{n}) = n(n - 1)(n - 2)(n - 3)x^{n-4}+n(n-1)(n-5)x^{n-2},
\end{equation}
for $n\in \naturals$. Each row of the matrix $\D_{\mbox{\boldmath $\mathrm{x}$}}^{\mbox{\boldmath $\mathrm{x}$}}$, which represents $D$ relative to $(\mbox{\boldmath $\mathrm{x}$},\;\mbox{\boldmath $\mathrm{x}$})$, is: 
\begin{equation}\label{eq-bmr} 
\left(...,0,n(n - 1)(n - 2)(n - 3),\  0,\  n(n-1)(n-5),0,...\right), n\in \naturals.
\end{equation}
As the leading coefficient of (\ref{eq-bmr}) is $\xi(n)=n(n - 1)(n - 5)$, it follows that the greatest root of $\xi(n)=0$ in $\naturals$ is $N=5$. Now $N$ subdivides the domain basis ${\mbox{\boldmath $\mathrm{x}$}}$ into two disjoint sets ${\mbox{\boldmath $\mathrm{x}$}}=\mbox{\boldmath $\mathrm{x}$}^{(1)}\cup \mbox{\boldmath $\mathrm{x}$}^{(2)}$ and so the expansion of (\ref{eq-bmr}) gives the two-blocks matrix representation of $D$ in the form (\ref{eq-bandD}), that is
\begin{equation} \label{eq-ex1.bmr}  \D_{\mbox{\boldmath $\mathrm{x}$}}^{\mbox{\boldmath $\mathrm{x}$}}=\left(\begin{array}{rrrrrr}
0&0&0&0&0&0.. \\
0&0&0&0&0&0.. \\
-6&0&0&0&0&0.. \\
0&-12&0&0&0&0.. \\   
24&0&-12&0&0&0.. \\
0&120&0&0&0&0.. \\
\hline\\
0&0&360&0&30&0..\\
0&0&0&840&0&84..\\
...
\end{array} \right).\\
\end{equation}
In view of Theorem \ref{theo-ref} a \emph{pre}-LREF of (\ref{eq-ex1.bmr}) in the form (\ref{eq-lrefD}) is derived by applying a sequence of row elementary operations to the top submatrix of (\ref{eq-ex1.bmr}) consisting of the first $N+1=6$ rows of $\D_{\mbox{\boldmath $\mathrm{x}$}}^{\mbox{\boldmath $\mathrm{x}$}}$, that is
\begin{equation} \label{eq-ex1.ref} 
\D_{\mbox{\boldmath $\mathrm{s}$}}^{\mbox{\boldmath $\mathrm{x}$}}=\left(\begin{array}{rrrrrr}
0&0&0&0&0&0.. \\
0&0&0&0&0&0.. \\
0&0&0&0&0&0.. \\
-6&0&0&0&0&0.. \\
0&-12&0&0&0&0.. \\   
24&0&-12&0&0&0.. \\
\hline\\
0&0&360&0&30&0..\\
0&0&0&840&0&84..\\
...
\end{array} \right).
\end{equation}
\\
The standard basis $\mbox{\boldmath $\mathrm{s}$}$ is derived by applying the same sequence of row elementary operations to the corresponding finite part of the domain basis $\mbox{\boldmath $\mathrm{x}$}$, whence ${\mbox{\boldmath $\mathrm{s}$}}=(1, x, x^{5}+10x^{3}, x^{2}, x^{3}, x^{4}, x^{6}, x^{7}, ...)$. The first three terms of $\mbox{\boldmath $\mathrm{s}$}$ are the null CPs, which correspond to the zero rows of (\ref{eq-ex1.ref}), forming a basis of $\mbox{$\mathrm{Ker}$}(D)$, that is $\mbox{\boldmath $\mathrm{u}$}=\{1, x, x^{5}+10x^{3}\}$. By  (\ref{eq-ex1.ref}) $\{\deg\: D(\mbox{\boldmath $\mathrm{s}$}_{n}(x)):\;0\leq n\leq 5\}=\{0,1,2\}$ and so (\ref{eq-S1}) implies that $S=\I(4)\verb+\+\{\deg\: D(\mbox{\boldmath $\mathrm{s}$}_{n}(x)):\;0\leq n\leq 5\}=\{3\}$, which means that there is no polynomial of third degree in $\mbox{$\mathrm{Im}$}(D)$. The set $S$ also follows directly from (\ref{eq-ex1.ref}), taking into account that all but the column position $j=3$ (forth column), correnspond to accessible degrees. We thus conclude that the residual space is ${\mathcal{R}}_{S}=span\{x^{3}\}$. As the resulting standard sequence associated with $D$ relative to $\mbox{\boldmath $\mathrm{x}$}$, that is $(x^{2}, x^{3}, x^{4}, x^{6}, x^{7}, ...)$, is contained in $\mbox{\boldmath $\mathrm{x}$}$ it follows from Corollary \ref{cor-prim} that a complete sequence of CPs, say $\mbox{\boldmath $\mathrm{q}$}$, exclusively consists of primary CPs. Moreover the terms of the derived standard sequence can be described by $\{x^{n}, n\geq 2\  \mbox{$\mathrm{and}$}\  n\not=5\}$ and they satisfy $\deg\: D(x^{n})=n+h$. Accordingly, as $\sigma_{n}=n+h$, all CPs in $\mbox{\boldmath $\mathrm{q}$}$ are primary-generic. By (\ref{eq-cre1}), on account of (\ref{eq-ex1.ref}), the terms of $\mbox{\boldmath $\mathrm{q}$}$ are given by
\[ \begin{array}{l} 
\mbox{\boldmath $\mathrm{q}$}_{0}(x)=-\frac{1}{6}x^{2}\vspace*{0.06in},\\ 
\mbox{\boldmath $\mathrm{q}$}_{1}(x)=-\frac{1}{12}x^{3}\vspace*{0.06in}, \\ 
\mbox{\boldmath $\mathrm{q}$}_{2}(x)=-\frac{1}{12}[x^{4}-24\mbox{\boldmath $\mathrm{q}$}_{0}(x)]\vspace*{0.06in}, \\
\mbox{\boldmath $\mathrm{q}$}_{4}(x)=\frac{1}{30}[x^{6}-360\mbox{\boldmath $\mathrm{q}$}_{2}(x)] \vspace*{0.06in},\\
\mbox{\boldmath $\mathrm{q}$}_{5}(x)=\frac{1}{84}[x^{7}-840\mbox{\boldmath $\mathrm{q}$}_{4}(x)]\vspace*{0.06in},\\
... 
\end{array} 
\]
Let us rewrite the terms of the standard sequence derived above as $\{x^{m+2},\ m\in \naturals\verb+\+\{3\}\}$. 
Now the entries of the nonzero rows of (\ref{eq-ex1.ref}) are occupied by the coefficients of the right hand side polynomials in
\[ \begin{array}{lr}
\!\!\!D(x^{m+2})\!\!=\!(m+2)(m+1)m(m-1)x^{m-2}\!\!+\!(m+2)(m+1)(m-3)x^{m}\!\!\!, \!\!& \mbox{if $m \not=3$}.
\end{array}                        
\]
Thus (\ref{eq-cre1}) takes the form \\
\[\begin{array}{lr} \mbox{\boldmath $\mathrm{q}$}_{m}(x)=
\mbox{$\frac{1}{(m+2)(m+1)(m-3)}$}[x^{m+2}-\mbox{$(m+2)(m+1)m(m-1)$} \mbox{\boldmath $\mathrm{q}$}_{m-2}(x)], &\!\! \mbox{if $m \not=3$}
\end{array}
\]
which coincides with Ortiz' original recursive formula. A basis  $\tilde{\mbox{\boldmath $\mathrm{q}$}}$ of CPs is given by $\tilde{\mbox{\boldmath $\mathrm{q}$}}=\mbox{\boldmath $\mathrm{u}$}\cup \mbox{\boldmath $\mathrm{q}$}$.

If we use in place of the fifth row in (\ref{eq-ex1.ref}) the row $(0,120,0,0,...)$ we derive a row equivalent matrix representation of $D$  that is also in LREF. This row is generated by (\ref{eq-gsyst1}) for $n=5$, being the sixth row of (\ref{eq-ex1.bmr}), and so the corresponding standard basis must be $(1, x, x^{5}+10x^{3}, x^{2}, x^{5}, x^{4}, x^{6}, x^{7}, ...)$. As $\sigma_{5}=\deg\: D(x^{5}) =1$, we deduce that $\sigma_{5}\not=h+5$, which means that the corresponding polynomial $\mbox{\boldmath $\mathrm{q}$}^{*}_{1}(x)=\frac{1}{120}x^{5}$\vspace*{0.04in} is a primary-singular CP of index $m=1$. Following Bunchaft's notation of multiple CPs, we can also write $\mbox{\boldmath $\mathrm{q}$}_{13}(x)$ in place of $\mbox{\boldmath $\mathrm{q}$}_{1}(x)$ and $\mbox{\boldmath $\mathrm{q}$}_{15}(x)$ in place of $\mbox{\boldmath $\mathrm{q}$}^{*}_{1}(x)$, and so Corollary \ref{cor-uniq} implies $\mbox{\boldmath $\mathrm{q}$}_{13}(x)-\mbox{\boldmath $\mathrm{q}$}_{15}(x)\in \mbox{$\mathrm{Ker}$}(D)$, as it can be immediately verified. An alternative complete sequence of CPs, say $\mbox{\boldmath $\mathrm{q}$}^{*}$, is generated by 
\[  \mbox{\boldmath $\mathrm{q}$}^{*}_{m}(x)=\left\{\begin{array}{ll}\!\!  \mbox{$\frac{1}{(m+2)(m+1)(m-3)}$}[x^{m+2}-\mbox{$(m+2)(m+1)m(m-1)$} \mbox{\boldmath $\mathrm{q}$}^{*}_{m-2}(x)], &\!\!\!\!  \mbox{if $m \not=1,3$}\\
\\
\!\!\!\! \frac{1}{120}x^{5}, &\mbox{if $m=1$}\end{array}\right. 
\]
for all $m\in S^{\prime}$. We conclude that $\mbox{\boldmath $\mathrm{q}$}^{*}$ consists of primary-generic CPs (for all $m\not=1,3$) and one primary-singular CP of index $m=1$. In this case a basis of CPs is given by $\mbox{\boldmath $\mathrm{u}$}\cup \mbox{\boldmath $\mathrm{q}$}^{*}$.
\paragraph{Example 2}
\label{sec:Example2}
Let the differential operator be
\begin{equation}\label{eq-DO2}
	D=(x^{2}+1)\frac{d^{4}}{dx^{4}}+(1-3x)\frac{d^{3}}{dx^{3}}+3\frac{d^{2}}{dx^{2}}.
\end{equation}
Then $h=max\{2-4, 1-3, 0-2\}=-2$ and a generating system of $\mbox{$\mathrm{Im}$}(D)$ is formed by the polynomials
\begin{equation}\label{eq-gsyst2}
	\begin{array}{ll}	D(x^{n}) =  n(n - 1)(n - 2)(n - 3)x^{n-4}\!\!\!\! &+ n(n - 1)(n - 2)x^{n-3}\\
		                                                       \!\!\!\! &+n(n-1)(n-3)(n-5)x^{n-2}
		                                                           \end{array}  
\end{equation}
for $n\in \naturals$. The leading coefficient of $D(x^{n})$ is $\xi(n)=n(n - 1)(n - 3)(n - 5)$ and so $N=5$. The  matrix representation of $D$ relative to $(\mbox{\boldmath $\mathrm{x}$},\;\mbox{\boldmath $\mathrm{x}$})$, as a two-blocks matrix, is displayed below 
\\
\begin{equation} \label{eq-ex2.bmr} 
 \D_{\mbox{\boldmath $\mathrm{x}$}}^{\mbox{\boldmath $\mathrm{x}$}}=\left(\begin{array}{rrrrrr}
0&0&0&0&0&0.. \\
0&0&0&0&0&0.. \\
6&0&0&0&0&0.. \\
6&0&0&0&0&0.. \\   
24&24&-12&0&0&0.. \\
0&120&60&0&0&0.. \\
\hline\\
0&0&360&120&90&0..\\
0&0&0&840&210&336..\\
...
\end{array}\right).
\end{equation}
By applying a sequence of row elementary operators to the top submatrix, as designated in (\ref{eq-ex2.bmr}), a matrix representation of $D$ in \emph{pre}-LREF, is of the form
\begin{equation} \label{eq-ex2.ref} 
	 \D_{\mbox{\boldmath $\mathrm{s}$}}^{\mbox{\boldmath $\mathrm{x}$}}=\left(\begin{array}{rrrrrr}
	0&0&0&0&0&0.. \\
	0&0&0&0&0&0.. \\
	0&0&0&0&0&0.. \\
	6&0&0&0&0&0.. \\   
	120&240&0&0&0&0.. \\
	24&24&-12&0&0&0.. \\
	\hline\\
	0&0&360&120&90&0..\\
	0&0&0&840&210&336..\\
	...
	\end{array} \right).
\end{equation}
The corresponding standard basis is ${\mbox{\boldmath $\mathrm{s}$}}=(1, x, x^{2}-x^{3}, x^{2}, 5x^{4}+x^{5}, x^{4}, x^{6}, x^{7}, ...)$. The null CPs form a basis of $\mbox{$\mathrm{Ker}$}(D)$, that is $\mbox{\boldmath $\mathrm{u}$}=\{1,x,x^{2}-x^{3}\}$. It follows directly from (\ref{eq-ex2.ref}) that $S=\{3\}$ and ${\mathcal{R}}_{S}=span\{x^{3}\}$. By (\ref{eq-cre1}), on account of (\ref{eq-ex2.ref}), a complete sequence $\mbox{\boldmath $\mathrm{q}$}$ of CPs is automatically generated by
\[  \begin{array}{l} 
\mbox{\boldmath $\mathrm{q}$}_{0}(x)=\frac{1}{6}x^{2}\vspace*{0.06in},\\ 
\mbox{\boldmath $\mathrm{q}$}_{1}(x)=\frac{1}{240}[5x^{4}+x^{5}-120\mbox{\boldmath $\mathrm{q}$}_{0}(x)]\vspace*{0.06in}, \\ 
\mbox{\boldmath $\mathrm{q}$}_{2}(x)=-\frac{1}{12}[x^{4}-24\mbox{\boldmath $\mathrm{q}$}_{1}(x)-24\mbox{\boldmath $\mathrm{q}$}_{0}(x)]\vspace*{0.06in}, \\
\mbox{\boldmath $\mathrm{q}$}_{4}(x)=\frac{1}{90}[x^{6}-360\mbox{\boldmath $\mathrm{q}$}_{2}(x)] \vspace*{0.06in},\\
\mbox{\boldmath $\mathrm{q}$}_{5}(x)=\frac{1}{336}[x^{7}-210\mbox{\boldmath $\mathrm{q}$}_{4}(x)]\vspace*{0.06in},\\
... 
\end{array} 
\]
On account of
\[ \small \left\{\begin{array}{ll} 
\begin{array}{rl}
\!\!\!\!\!\!D(x^{m+2})=\!\!\!\!&(m+2)(m+1)m(m-1)x^{m-2}+(m+2)(m+1)m x^{m-1} \\
		               &+(m+2)(m+1)(m-1)(m-3)x^{m} 
\end{array}\!\!\!\!\!\!,	& \mbox{if $m \not=1,3$}\\
\\
D(5x^{4}+x^{5}) = 120+240x, \!\!\!\!\!\!& \mbox{if $m=1$},\end{array}\right.                        
\]
$\mbox{\boldmath $\mathrm{q}$}$ is equally generated by the recurrence formula 
\[ \small \mbox{\boldmath $\mathrm{q}$}_{m}(x)\!\!=\!\!\left\{\begin{array}{ll} \begin{array}{rcl}\!\!\!\!\!\!\mbox{$\tiny{\frac{1}{(m+2)(m+1)(m-1)(m-3)}}$}[&\!\!\!\!\!\!x^{m+2}- \mbox{$\tiny{(m+2)(m+1)m}$}\mbox{\boldmath $\mathrm{q}$}_{m-1}(x) \\
&\!\!\!\!-\mbox{$\tiny{(m+2)(m+1)m(m-1)}$} \mbox{\boldmath $\mathrm{q}$}_{m-2}(x)]\end{array}\!\!\!, &\!\!\!\! \mbox{if $m \not=1,3$}\\
\\
\frac{1}{240}[5x^{4}+x^{5}-120\mbox{\boldmath $\mathrm{q}$}_{0}(x)], &\!\!\!\mbox{if $m=1$}\end{array}\right. 
\]
for all $m\in S^{\prime}$. It follows from (\ref{eq-ex2.ref}) that $1\in S^{\prime}$ and from (\ref{eq-ex2.bmr}) that $\deg\:  D(x^{n})\not=1$ for all $n\in \naturals$. Thus $\sigma: J \ni n\mapsto \sigma_{n}=\deg\:D(x^{n})\in S^{\prime}$ is not surjective.
Theorem \ref{theo-prim} implies that every complete sequence of CPs contains at least one derived-singular CP. In particular $\mbox{\boldmath $\mathrm{q}$}_{1}(x)$ is a derived-singular CP and so $\mbox{\boldmath $\mathrm{q}$}$ consists of primary-generic CPs for all $m\not=1,3$ and one derived-singular CP of index $m=1$. A basis of CPs is $\tilde{\mbox{\boldmath $\mathrm{q}$}}=\mbox{\boldmath $\mathrm{u}$}\cup \mbox{\boldmath $\mathrm{q}$}$. 

Let us consider the differential equation 
\begin{equation}\label{eq-ode.ex1}
	Dy(x)=g(x),
\end{equation}
where $D$ is given by (\ref{eq-DO2}) and $g(x)=\sum^{n}_{i=0}g_{i}x^{i}$ is a polynomial of arbitrary but fixed degree $n$. In view of (\ref{eq-can3}), the corresponding residual polynomials, given by $\mbox{\boldmath $\mathrm{r}$}_m(x)=D(\mbox{\boldmath $\mathrm{q}$}_m(x))-x^m$, are
\[
\mbox{\boldmath $\mathrm{r}$}_{0}(x)=\mbox{\boldmath $\mathrm{r}$}_{1}(x)=\mbox{\boldmath $\mathrm{r}$}_{2}(x)=0, \
\mbox{\boldmath $\mathrm{r}$}_{4}(x)=\frac{4}{3}x^{3},\
\mbox{\boldmath $\mathrm{r}$}_{5}(x)=\frac{5}{3}x^{3},\
\mbox{\boldmath $\mathrm{r}$}_{6}(x)=-\frac{10}{3}x^{3},
..., \mbox{\boldmath $\mathrm{r}$}_{n}(x)=r_{n3}\, x^3
\]
By (\ref{eq-parm}), $g(x)\in \mbox{$\mathrm{Im}$}(D)$ (STMC) if and only if 
\begin{equation}\label{eq-STMC}
g_{3}=\frac{4g_{4}}{3}+\frac{5g_{5}}{3}-\frac{10g_{6}}{3}+....+r_{n3}\,g_n
\end{equation}
Accordingly, if $g(x)$ is in the form
\begin{equation}\label{eq-rhs.ex1}
g(x)= g_{0}+g_{1}x+g_{2}x^{2} +(\frac{4g_{4}}{3}+\frac{5g_{5}}{3}-\frac{10g_{6}}{3}+...+r_{n3}\,g_n)x^{3}+g_{4}x^{4}+...+g_{n}x^{n},
\end{equation}
then the exact polynomial solution of (\ref{eq-ode.ex1}) is given by
\begin{equation}\label{eq-sol.ex1}
\begin{array}{ll}	y(x)= C_{0}+C_{1}x+C_{2}(x^{2}-x^{3})&\!\!\!\!+g_{0}\mbox{\boldmath $\mathrm{q}$}_{0}(x)+g_{1}\mbox{\boldmath $\mathrm{q}$}_{1}(x)+g_{2}\mbox{\boldmath $\mathrm{q}$}_{2}(x)\\                                                                   &\!\!\!\!+g_{4}\mbox{\boldmath $\mathrm{q}$}_{4}(x)+...+g_{n}\mbox{\boldmath $\mathrm{q}$}_{n}(x)
  \end{array} 
\end{equation}
Let $f(x)$ be a polynomial not necessarily in the polynomial range of $D$, defined in (\ref{eq-DO2}). Let also $Dy(x)=f(x)$ be the corresponding differential equation subjected by four supplementary conditions. As $h+\nu=-2+4=2$ and $N=5$, we need only two $\tau$-terms for any $n>5$ and so the perturbed equation must be in the form
\begin{equation}\label{eq-pert.ex1}
	Dy_{n}(x)=f(x)+\tau_{1}^{(n)}T_{n-1}(x)+\tau_{2}^{(n)}T_{n}(x).
\end{equation}
Let us rewrite the right hand side of (\ref{eq-pert.ex1}) as a single polynomial  of ascending powers, say $y(x)$. The  coefficients of $y(x)$ are linear combinations of $\tau_{1}^{(n)},\;\tau_{2}^{(n)}$. 
As $y(x)$ must be in  $\mbox{$\mathrm{Im}$}(D)$, we can identify $y(x)$ with $g(x)$. In particular, the STMC,  given by (\ref{eq-STMC}), must hold, providing a linear equation with unknowns $\tau_{1}^{(n)},\;\tau_{2}^{(n)}$. 
The four supplementary conditions plus the STMC would serve to determine the five parameters ($C_{0}, C_{1}, C_{2}, \tau_{1}^{(n)}, \tau_{2}^{(n)}$), which take place in the $\tau$-approximate solution.


\begin{thebibliography}{1}
\bibitem{La:An} Lanczos, C.: ``\emph{Applied Analysis}", Prentice Hall, New Jersey, (1956).
\bibitem{Or:tau} Ortiz, E.L.: \emph{The Tau Method}, SIAM J. Numer. Anal. \textbf{6}, (1969), 480-491.
\bibitem{Sa:op} Ortiz, E.L., Samara, H.: \emph{An operational approach to the Tau Method for the numerical
solution of non-linear differential equations}, Computing \textbf{27}, (1981), 15-25.
\bibitem{Bu:ext} Bunchaft, M.E. Froes.: \emph{Some extensions of the Lanczos-Ortiz theory of canonical polynomials in the tau method}, Math. Comp., \textbf{66}, No 218 (1997), 609-621. 
\bibitem{Pa:reap} Paraskevopoulos, A.G.: \emph{A recursive approach to the solution of abstract linear equations and the Tau Method}, Computers Maths. Applic., vol. \textbf{47}, No 10-11(2004), 1753-1774.
\bibitem{El:un} El-Daou, M.K., Ortiz, E.L., Samara, H.: \emph{A unified approach to the Tau Method and Chebyshev
series expansion techniques}, Computers Math. Applic., vol. \textbf{25}, No 3(1993), 73-82.
\bibitem{El:co} El-Daou M. K., Ortiz E. L.: \emph{A recursive formulation of collocation in terms of canonical polynomials}, Computing, \textbf{52}(1994), 177-202.
\bibitem{El:Ga} El-Daou M. K., Ortiz E. L.:  \emph{A recursive formulation of Galerkin's method in terms of the tau method: Bounded and unbounded domains}, Computers Math. Applic., vol.35, No 12(1998), 83-94.
\bibitem{El:an} El-Daou M. K., Ortiz E. L.: \emph{The Tau Method as an analytic tool in the discussion of equivalence results across numerical methods}, Computing, \textbf{60}, No 4(1998), 365-376.
\bibitem{Pa:sy} Paraskevopoulos, A.G.: \emph{On the decomposition of a matrix differential operator with polynomial coefficients and the vector polynomial solutions of ODEs}, Proceedings of the Cornelius Lanczos International Centenary Conference, SIAM. J., 347-352, (1994). 
\bibitem{Pa:sy} Paraskevopoulos, A.G.: \emph{A direct sum decomposition of matrix differential operators and \\ the vector formulation of the spectral-tau method}, to appear. 
\bibitem{Ja:Ab} Jacobson, N.: ``\emph{Lectures in Abstract Algebra,  Vol.II}", D. Van Nostrand, Princeton, (1953).
\bibitem{Pa:igjea} Paraskevopoulos, A.G.: \emph{The Solution of Row-finite Systems with the Infinite Gauss-Jordan Elimination}, arXiv, (2014).
\bibitem{Bou:Alg} Bourbaki, N.: ``\emph{Elements of Mathematics, Algebra I}", Hermann, Paris, (1974).
\end{thebibliography}
\end{document}